\documentclass[12pt, a4paper]{article}
\usepackage{geometry}
\relpenalty=9999
\binoppenalty=9999
\usepackage{titlesec} 
\usepackage{amsfonts}
\usepackage{amsthm}
\usepackage{amssymb}
\usepackage{amsmath}
\usepackage{theoremref}
\usepackage{enumerate}
\usepackage{bbm}
\usepackage{bm}
\usepackage{authblk}
\usepackage{mathtools}
\usepackage{a4wide}
\usepackage{xcolor}

\DeclarePairedDelimiter\abs{\lvert}{\rvert}%
\DeclarePairedDelimiter\norm{\lVert}{\rVert}%

\makeatletter
\let\oldabs\abs
\def\abs{\@ifstar{\oldabs}{\oldabs*}}
\let\oldnorm\norm
\def\norm{\@ifstar{\oldnorm}{\oldnorm*}}
\makeatother

\makeatletter
\g@addto@macro\bfseries{\boldmath}
\makeatother

\newcommand{\A}{\mathcal{A}}

\newcommand{\K}{\mathcal{K}}

\newcommand{\T}{\mathbb{T}}
\newcommand{\Ca}{\mathcal{C}}
\newcommand{\z}{\zeta}
\newcommand{\conj}[1]{\overline{#1}}
\newcommand{\D}{\mathbb{D}}

\newcommand{\Po}{\mathcal{P}}
\newcommand{\cD}{\conj{\mathbb{D}}}
\newcommand{\ip}[2]{\big\langle #1, #2 \big\rangle}
\newcommand{\dist}[2]{\text{dist}( #1, #2 ) }

\newcommand{\m}{\textit{m}}
\newcommand{\hil}{\mathcal{H}} 

\newcommand{\Hb}{\mathcal{H}(b)}
\newcommand{\hb}{\mathcal{H}(b)}

\renewcommand\Re{\operatorname{Re}}

\newtheorem{thm}{Theorem}[section]
\newtheorem{lemma}[thm]{Lemma}

\newtheorem{cor}[thm]{Corollary}
\newtheorem{prop}[thm]{Proposition}
\theoremstyle{definition}

\theoremstyle{definition}
\newtheorem{definition}{Definition}[section]

\newcommand{\Addresses}{{
		\bigskip
		\footnotesize
		
		Adem Limani, \\ \textsc{Centre for Mathematical Sciences, Lund University, \\
			Lund, Sweden}\\
		\texttt{adem.limani@math.lu.se}
		
		\medskip
		
		Bartosz Malman, \\ \textsc{KTH Royal Institute of Technology, \\
			Stockholm, Sweden}\\
			\texttt{malman@kth.se}		
	}}

\begin{document}
\title{\textbf{Constructions of some families of smooth Cauchy transforms}}
\author{Adem Limani and Bartosz Malman}
\date{ }
\maketitle

\begin{abstract}
For a given Beurling-Carleson subset $E$ of the unit circle $\T$ which has positive Lebesgue measure, we give explicit formulas for measurable functions supported on $E$ such that their Cauchy transforms have smooth extensions from $\D$ to $\T$. The existence of such functions has been previously established by Khrushchev in 1978, in non-constructive ways by the use of duality arguments. We construct several particular families of such Cauchy transforms with a few applications in operator and function theory in mind. In one application, we give a new proof of irreducibility of the shift operator on certain Hilbert spaces of functions. In another application, we establish a permanence principle for inner factors under convergence in certain topologies. The applications lead to a self-contained duality proof of the density of smooth functions in a very large class of de Branges-Rovnyak spaces. This extends the previously known approximation results.
\end{abstract}

\section{Introduction}

\label{introsec}

Let $E$ be a closed subset of the unit circle $\T = \{ z \in \mathbb{C} : |z| = 1 \}$ of the complex plane $\mathbb{C}$, and let the notation $dm$ stand for the Lebesgue measure, normalized by the condition $m(\T) = 1$. The starting point of our development is the following question which has been studied and answered by Khrushchev in \cite{khrushchev1978problem}. Namely, what conditions on the set $E$ guarantee the existence of a non-zero measurable function $h$ supported on $E$ for which the \textit{Cauchy transform}, or \textit{Cauchy integral}, \begin{equation}
\label{cauchytransform} C_{h1_E}(z) := \int_{\T} \frac{h(\zeta)1_E(\zeta)}{1-z\conj{\zeta}} dm(\zeta) = \int_E \frac{h(\zeta)}{1-z\conj{\zeta}} dm(\zeta), \quad z \in \D, \end{equation} which is an analytic function in the unit disk $\D = \{ z \in \mathbb{C} : |z| < 1 \}$, can be extended to a continuous function on the closed disk $\cD$? What conditions on $E$ are necessary to assure existence of such a measurable function $h$ for which also the complex derivative $C_{h1_E}'$ admits such an extension? In the above formula, and throughout the article, $1_E$ denotes the indicator function of the set $E$.

For the question to be interesting, we should think of the set $E$ as being rather rough. Indeed, if $E$ contains an arc $A$, then certainly any smooth function $s :\T \to \mathbb{C}$ with support on $A$ will be transformed into a function $C_s$ which is a member of $\A^\infty$. Here, $\A^\infty$ denotes the algebra of analytic functions in $\D$ for which the derivatives of any order extend continuously to $\cD$. The containment $C_s \in \A^\infty$ follows in this case readily from the rapid rate of decay of Fourier coefficients $\{s_n\}_n$ of the smooth function $s$, and the fact that $C_s(z) = \sum_{n=0}^\infty s_nz^n$. 

By $\A$ we will denote the class of analytic functions in $\D$ which admit a continuous extension to $\cD$, and by $\A^n$ we denote those functions for which the $n$:th derivative admits such an extension, that is $f^{(n)} \in \A$. Thus $\A^\infty = \cap_{n \geq 1} \A^n$. Khrushchev in \cite{khrushchev1978problem} has solved the existence part of the above stated problem in full. For a general closed set $E$, he establishes the existence of a non-zero measurable function $h$, with support only on $E$, such that $C_h$ given by \eqref{cauchytransform} is in the class $\A$. Moreover, he proves that there exists a non-zero measurable function $h$ supported on $E$ for which the transform \eqref{cauchytransform} is a function in $\A^\infty$ essentially if and only if $E$ contains a \textit{Beurling-Carleson set} of positive Lebesgue measure. A set $E$ is a Beurling-Carleson set if it is closed and if the condition \begin{equation}
\label{BCdef} \sum_{n=1} |A_n| \log(1/|A_n|) < \infty
\end{equation} is satisfied, where $\{A_n\}_n$ is the  system of disjoint open subarcs of $\T$ union of which equals the complement $\T \setminus E$, and $|A|$ denotes the length of the arc $A$. The class of Beurling-Carleson sets has a rich history, and appears notably in the solution of boundary zero set problems for smooth analytic functions, and zero set problems for Bergman spaces (see Carleson's paper \cite{carlesonuniqueness} and Korenblum's paper \cite{korenblum1975extension}, for instance). In \cite{DBRpapperAdem}, the present authors found that Beurling-Carleson sets play an important role in smooth approximation theory in de Branges-Rovnyak spaces, another classical and well-studied family of Hilbert spaces of analytic functions. 

A notable feature of the proofs of the above mentioned results of Khrushchev in \cite{khrushchev1978problem} is that they are non-constructive. The existence of the measurable function $h$ is established by duality arguments, and an explicit formula for $h$ is lacking. A similar but slightly simplified duality proof appears also in \cite{havinbook}. In the case when $E$ is a general closed subset of $\T$, Khrushchev proves the existence of $h$ by a duality argument involving the classical theorem of Khintchine–Ostrowski which deals with simultaneous convergence of Nevanlinna class functions on $\D$ and a subset of $\T$ (see \cite{havinbook} or \cite{khrushchev1978problem} for a precise statement). In the case when $E$ contains a Beurling-Carleson set of positive measure, he first proves a variant of the Khintchine-Ostrowski theorem for a certain different class of functions, and asserts the existence of $h$ by a duality argument similar to the one in the first case. The mentioned variant of Khintchine-Ostrowski theorem has also been independently established by Kegejan in \cite{kegejanex}. 

One of the aims of this article is to show that, in the second case in which Beurling-Carleson sets and the class $\A^\infty$ is involved, the theorem of Khrushchev can be obtained in a rather elementary and explicit way by using modifications of other known constructions. Thus, in one of the main results, \thref{constrKhru}, we will give explicit formulas for measurable functions $h$ supported on a given Beurling-Carleson set $E$ for which $C_{h1_E} = C_h$ defined by formula \eqref{cauchytransform} is a function in $\A^\infty$. 

The article is part of a larger project of the authors which deals with connections between some aspects of the uncertainty principle in harmonic analysis, removal of singularities of Cauchy transforms, theory of subnormal operators, and approximation theory in de Branges-Rovnyak spaces. Some of these connections are described in the articles \cite{ptmuinnner} and \cite{DBRpapperAdem}. With this in mind, we specialize the construction in Sections \ref{uncertaintysec} and \ref{permanencesec} and use it in two principal applications. The results established in these sections are improvements of previous work in \cite{ptmuinnner}, in some cases with completely new proofs. They are then combined and further applied in Section \ref{dbrsec} in order to sharpen a non-constructive approximation result in the theory of de Branges-Rovnyak spaces which is found in \cite{DBRpapperAdem}. 


We shall now describe the studied applications in more detail. Section \ref{uncertaintysec} deals with the following general problem. Let $X$ be some space of functions defined on a domain in the complex plane which contains the constants, and is invariant under the \textit{forward shift operator} $M_z : f(z) \mapsto zf(z)$, where $z$ is the coordinate function (or identity function) of the complex plane. The assumptions imply that the analytic polynomials can be considered as a subset of $X$, and we consider their closure in $X$, which we denote by $D$. In many important cases the functions in $X$ live on the closed unit disk $\cD$, the operator $M_z$ is a contraction (in the sense that $\|M_zf\|_X \leq \|f\|_X$ is satisfied) and a question or assumption which appears in several contexts (see \cite{dbrcont}, \cite{kriete1990mean} and \cite{aleman2009nontangential}, for instance) is related to existence of invariant subspaces of the operator $M_z: D \to D$ on which it acts as an isometry. In the particular case $X = L^2(\mu)$, where $\mu$ is a positive Borel measure compactly supported in the complex plane, the closure of analytic polynomials is usually denoted by $\Po^2(\mu)$. These spaces, together with the shift operator $M_z$, provide a model for the class of subnormal operator (here \cite{conway1991theory} is a great reference).
  If $\mu$ is a positive measure of the form \begin{equation} \label{mueq}  d\mu = dA + 1_E d\m \end{equation} ($dA$ and $d\m$ being the area measure of $\D$ and Lebesgue measure of $\T$, respectively) then the condition that $M_z$ is completely non-isometric on the closure of polynomials $D := \Po^2(\mu)$ is precisely the condition which ensures that $\Po^2(\mu)$ can be identified with a genuine space of analytic functions in $\D$. If not, then $\Po^2(\mu)$ will contain as a subset a space of the form $L^2(1_Fd\m)$, for some measurable subset $F$ of $E$, on which $M_z$ obviously acts as an isometry. In the context of $\Po^2(\mu)$-spaces, the lack of a subspaces of the type $L^2(1_Fd\m)$ goes under the name of \textit{irreducibility} (see \cite{thomson1991approximation} and \cite{aleman2009nontangential}). It is known that if $E$ is a Beurling-Carleson set, then the corresponding shift operator will be completely non-isometric on $\Po^2(\mu)$. This follows essentially from Khrushchev's work in  \cite{khrushchev1978problem}. However, replacing $dA$ by a weighted version $w\,dA$ in \eqref{mueq}, where $w$ is some function which decays rapidly to zero near the boundary of $\D$, or replacing $E$ by a set more complicated than a Beurling-Carleson set, then it might very well happen that $M_z$ admits an invariant subspace on which it acts as an isometry (see \cite{kriete1990mean} and in particular \cite{khrushchev1978problem} for details). In Section \ref{uncertaintysec} we construct a special family of smooth Cauchy transforms and employ it in a functional analytic argument to establish that $M_z$ is completely non-isometric on a wide range of Hilbert spaces of analytic functions which are structurally similar to the $\Po^2(\mu)$-spaces induced by measures similar to \eqref{mueq}. This development in particular implies the above mentioned results for $\Po^2(\mu)$-spaces, and even their extensions from \cite{ptmuinnner}, but the method of proof is completely different, arguably much more straight-forward, and the result actually reaches further. This extension is important for our principal application to de Branges-Rovnyak spaces in Section \ref{dbrsec}. We remark that a wealth of information on the behaviour of $\Po^2(\mu)$-spaces which are spaces of analytic functions can be found in \cite{aleman2009nontangential}. 

In Section \ref{permanencesec}, an explicitly constructed family of smooth Cauchy transforms will be used to solve a problem of spectral theory of functions in the unit disk, which again extends results of \cite{ptmuinnner} to a wider range of spaces. In the problem setting, we fix a topological space of analytic functions in $\D$ which contains at least the algebra $H^\infty$ of bounded analytic functions in $\D$. Every bounded analytic function $f$ admits an inner-outer factorization into $f = \theta U = BS_\nu U$, where $B$ is the usual Blaschke product, $S_\nu$ is a singular inner function constructed from a positive singular measure $\nu$, and $U$ is an outer function (see \cite{garnett} for precise definitions). The factor $\theta = BS_\nu$ is the \textit{inner factor} of $f$. The factorization can be thought of as a type of spectral decomposition of the function $f$, and the measure $\nu$ is then the singular part of this decomposition. Now, if a sequence of bounded analytic functions $S_\nu f_n$ converges to a bounded analytic function $f$ in a certain topology, then one might ask what part of the measure $\nu$ must necessarily appear in the spectral decomposition of $f$ as a consequence of this convergence. This problem is intimately connected to approximation problems in model spaces and de Branges-Rovnyak spaces (see \cite{DBRpapperAdem} and \cite{smoothdensektheta}). A carefully constructed family of smooth Cauchy transforms will help us to implement a functional analytic argument and establish this \textit{inner factor permanence} result for a very large class of singular inner functions and a range of spaces which is larger than the one appearing in the article \cite{ptmuinnner}, in which the present authors investigated this principle for the topologies induced by the above mentioned $\Po^2(\mu)$-spaces.

The improvements of previous results mentioned in the last two paragraphs should be of independent interests, but they are inspired by our ultimate application to the approximation theory in de Branges-Rovnyak spaces $\hb$, which we present in the final Section \ref{dbrsec}. For background on the theory of $\hb$-spaces, see \cite{sarasonbook}, \cite{hbspaces1fricainmashreghi} and \cite{hbspaces2fricainmashreghi}. A basic problem in the theory is to identify what functions are contained in the space $\hb$ and how this depends on the structure of the symbol $b$, which is any analytic function mapping the disk $\D$ into itself. It has been established by Sarason (see \cite{sarasonbook}) that the analytic polynomials are contained and norm-dense in the space $\hb$ if and only if the weight \[\Delta(\zeta) := 1-|b(\zeta)|^2, \quad \zeta \in \T\] has an integrable logarithm on $\T$: \begin{equation}
    \int_\T \log (\Delta) d\m > -\infty.
\end{equation} Moreover, it is also known that any $\hb$ space contains a dense subset of functions in $\A$ (see \cite{comptesrenduscont}). In particular, the inner factor $\theta$ of $b$ plays no role in the context of approximations by analytic polynomials or functions continuous up to the boundary. The situation is different in the context of approximations by functions in the class $\A^\infty$ or $\A^n$, or even the Hölder classes. It is known from \cite{DBRpapperAdem} that the smooth approximation problem is intimately connected with the two problems mentioned above: irreducibility problem for certain $\Po^2(\mu)$-spaces, and the inner factor permanence problem. Consequently, a combination of results in \cite{DBRpapperAdem} and \cite{ptmuinnner} shows that the functions in the class $\A^n$ will be dense in the space $\hb$ if the outer factor of $b$ is "good" and the "bad" part of the singularities of the inner factor of $b$ is appropriately located on $\T$, with respect to the outer factor. More precisely, it was found in \cite{dbrcont} that if weight $\Delta$ appearing above is of the form \begin{equation}
    \label{deltastruct} 
    \Delta = \sum_{n=1}^\infty w_n 1_{E_n},
\end{equation} where each set $E_n$ is a Beurling-Carleson set of positive Lebesgue measure, and $w_n$ is a non-negative weight satisfying \begin{equation} \label{deltalogint}
    \int_{E_n} \log (w_n) d\m > -\infty,
\end{equation} then the functions in $\A^n$ are dense in $\hb$ if $b$ is outer. Note that the two conditions above say something about the "good" structure of the support set of the weight $\Delta$, and something about $\Delta$ not being too small on the support. In \cite{dbrcont} examples are highlighted in which bad support and small size of $\Delta$ both independently prohibit such approximations in $\hb$, not only by functions in $\A^\infty$ but even by functions in the Hölder classes. In the presence of a non-trivial inner factor $\theta = BS_{\nu}$ of $b$, results of \cite{DBRpapperAdem} and \cite{ptmuinnner} show that what matters is the location on $\T$ of the support of a certain part of the singular measure $\nu$. To describe this mechanism, we will need to introduce a simple decomposition of the measure $\nu$ which has appeared already in a similar context in \cite{smoothdensektheta} and also in work of Roberts in \cite{roberts1985cyclic}. Namely, the measure $\nu$ can be expressed as a sum \begin{equation} \label{nu-decomp} \nu= \nu_{\Ca} + \nu_{\K}
\end{equation}
where the two measures are mutually singular, there exists an increasing sequence of Beurling-Carleson sets of Lebesgue measure zero $\{F_n\}_{n \geq 1}$ such that $$\lim_{n \to \infty} \nu_\Ca(F_n) = \nu_\Ca(\T),$$ and $$\nu_\K(F) = 0$$ for any Beurling-Carleson set $F$ of Lebesgue measure zero. The part $\nu_\Ca$ plays no role in our approximation problem. However, the support of $\nu_\K$ must necessarily be located on the support of $\Delta$ for approximations by smooth functions to be possible. Moreover, if the conditions \eqref{deltalogint} and \eqref{deltastruct} are satisfied and the mass of $\nu_\K$ is located appropriately in the sense that $\nu_\K(\cup_n E_n) = \nu_\K(\T)$, then indeed functions in the class $\A^n$ are dense in $\hb$. In the final result of this article, we extend this density claim to functions in $\A^\infty$, which sharpens the result found in $\cite{DBRpapperAdem}$. A a more detailed exposition of why this approximation result is close to the best possible also appears in \cite{DBRpapperAdem}.

We remark that the very interesting problem of giving an explicit formula for $h$ supported on any given closed set $E$ such that $C_{h1_E} \in \A$ remains open, and the approach presented here is not applicable.

\section{Construction of an analytic "cut-off" function}

\label{cutoffsec}

We start off by presenting the constructing of a certain analytic function with strong decay properties near a given Beurling-Carleson set. The reason for calling it a \textit{cut-off function}, as in the name of the section, will become clear from a proof of the coming application in \thref{constrKhru}. Our construction is a straightforward adaptation of a technique from \cite{hedenmalmbergmanspaces}, more precisely from Lemma 7.11 of that work. We could have also followed the ideas of \cite{novinger1971holomorphic} or \cite{taylor1970ideals}. The proof is included for the reader's convenience and because the construction is crucial for our development.

\begin{lemma}\thlabel{mainlemma}
Let $E$ be a Beurling-Carleson set, of either zero or positive Lebesgue measure. There exists an analytic function $g: \D \to \D$ such that the function $G(t) := g(e^{it})$ is smooth on $\T \setminus E$, and we have the estimate \begin{equation}
\label{Gest} |G^{(m)}(t)| = o(\dist{e^{it}}{E}^N), \quad e^{it} \to E
\end{equation} for each pair of non-negative integer $N$ and $m$. Here $G^{(m)}$ denotes the $m$:th derivative of $G$ with respect to the variable $t$, and $\dist{\cdot}{\cdot}$ denotes the distance between two closed sets. In particular, if $E$ has Lebesgue measure zero, then $G(t)$ is smooth on $\T$.
\end{lemma}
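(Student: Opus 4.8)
The plan is to realize $g$ as an outer function of the form $g = \exp(-\Psi)$, where $\Psi$ is the analytic function on $\D$ determined by a weight $\psi \geq 0$ on $\T$ through the Herglotz--Cauchy integral
\[
\Psi(z) = \int_\T \frac{\zeta + z}{\zeta - z}\, \psi(\zeta)\, dm(\zeta),
\]
so that $\Re \Psi$ is the Poisson extension of $\psi$, the boundary modulus is $|g| = e^{-\psi} \leq 1$, and hence $g : \D \to \D$. This reduces the lemma to two tasks: constructing a weight $\psi$ with the right rate of blow-up near $E$, and transferring bounds on $\psi$ and its derivatives to bounds on the boundary function $G(t) = g(e^{it})$.

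For the weight I would work arc by arc. Writing $d(\zeta) = \dist{\zeta}{E}$, note that for $\zeta$ in a complementary arc $A_n$ the quantity $d(\zeta)$ is exactly the distance to the nearer endpoint of $A_n$, ranging from $0$ to $|A_n|/2$. I would set $\psi \equiv 0$ on $E$ and, on each $A_n$, let $\psi$ be a suitably smoothed version of $\omega(d(\zeta))\log(1/d(\zeta))$, where $\omega(x) \to \infty$ as $x \to 0^+$ is a factor growing slowly enough that $\sum_n \omega(|A_n|)\,|A_n|\log(1/|A_n|) < \infty$. The existence of such an $\omega$ is a standard consequence of the convergence of $\sum_n |A_n|\log(1/|A_n|)$, i.e.\ of the Beurling--Carleson condition; together with the elementary estimate $\int_0^{\ell}\log(1/x)\,dx \asymp \ell\log(1/\ell)$ this guarantees $\int_\T \psi\, dm < \infty$, so that $g$ is a genuine non-zero function. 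The slow divergence $\omega \to \infty$ is exactly what forces $e^{-\psi} = d^{\,\omega(d)} = o(d^N)$ for every $N$ as $d \to 0$, which is the case $m = 0$ of the estimate. Choosing the smoothing carefully, one also arranges that each $\psi^{(m)}$ on $\T \setminus E$ is bounded by a fixed power of $1/d$.

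To handle derivatives I would use that $g = e^{-\Psi}$ gives, via the Faà di Bruno formula, a bound $|G^{(m)}(t)| \leq P_m\big(|\Psi'|, \dots, |\Psi^{(m)}|\big)\,e^{-\psi}$ for a universal polynomial $P_m$, where the derivatives are taken along the boundary. Thus everything reduces to bounding the derivatives of the analytic completion $\Psi$ near $E$ by powers of $1/d$: once such bounds are available, the factor $e^{-\psi} = d^{\,\omega(d)}$ with $\omega \to \infty$ absorbs every polynomial loss and yields $|G^{(m)}| = o(d^N)$ for each $N$. Smoothness of $G$ on the open set $\T \setminus E$ follows from the pseudolocality of conjugation: since $\psi$ is smooth on each arc and integrable on $\T$, its conjugate, and hence $\Psi$, is smooth there.

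The main obstacle is precisely this quantitative control of $\Psi$ near $E$. Differentiating under the integral gives $\Psi^{(j)}(z) \asymp \int_\T \zeta\, (\zeta - z)^{-(j+1)}\,\psi(\zeta)\, dm(\zeta)$, a singular integral whose kernel blows up at the point of approach. I would estimate it by splitting $\T$ into the portion of the ambient arc close to the approach point $\zeta_0$, where the smoothness of $\psi$ and the derivative bounds control the near-diagonal contribution, and the complementary part, where $|\zeta - \zeta_0| \gtrsim d(\zeta_0)$ renders the kernel harmless and the integrability of $\psi$ closes the estimate; the Beurling--Carleson geometry of the arcs is what keeps these bounds uniform as $\zeta_0 \to E$. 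Granting these estimates, the measure-zero case is immediate: when $m(E) = 0$ one sets $G \equiv 0$ on $E$, and the decay estimate shows that $G$ and all of its derivatives extend continuously across $E$, so that $G$ is smooth on all of $\T$.
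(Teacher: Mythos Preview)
Your approach is correct in outline and is in fact a classical alternative route (close to the constructions of Novinger and of Taylor--Williams that the paper cites), but it differs substantially from the paper's proof, and the comparison is worth making.

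The paper does not use a Herglotz integral of a weight on $\T$. Instead it performs a Whitney decomposition of each complementary arc into sub-arcs $B_j$ with $|B_j| = \dist{B_j}{E}$, picks $\lambda_j \to \infty$ slowly enough that $\sum_j \lambda_j\, |B_j|\log(1/|B_j|) < \infty$ still holds, and defines
\[
h(z) = -\sum_j \frac{\lambda_j\, b_j\, |B_j|\log(1/|B_j|)}{r_j b_j - z}, \qquad r_j = 1 + |B_j|,\quad b_j = \text{midpoint of } B_j,
\]
and $g = e^h$. Each summand has negative real part on $\D$, so $g:\D\to\D$. The decisive feature is that the poles $r_j b_j$ lie \emph{outside} $\cD$, at distance comparable to $|B_j|$ from the arc $B_j$; hence $h$ extends analytically across every complementary arc and the bounds on $\partial_t^m h(e^{it})$ by powers of $\dist{e^{it}}{E}^{-1}$ follow from term-by-term differentiation of a locally finite sum of rational functions. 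Your ``main obstacle'' --- the quantitative control of the conjugate function near $E$ via a near/far splitting of a singular integral --- simply does not arise.

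The trade-off: your outer-function construction is conceptually natural and the weight $\psi$ is easy to describe, but you then have to push derivative bounds through the Hilbert transform, which you rightly flag as the hard step and only sketch (the splitting argument does go through, but it is a genuine page of estimates). The paper's rational-function construction is less obvious to guess, yet it makes the derivative control a one-line observation and yields a completely explicit formula for $g$, which is exactly what the paper is after.
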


\begin{proof}
Let $\cup_{n \in \mathbb{N}} A_n = \T \setminus E$ be the complement of $E$ with respect to $\T$. For each subarc $A_n$, we perform the classical \textit{Whitney decomposition} $A_n = \cup_{k \in \mathbb{Z}} A_{n,k}$. More precisely, let $A_{n,0}$ be the arc with the same midpoint as $A_n$ but having one third of the length of $A_n$. For this choice of the length we have $|A_{n,0}| = \dist{A_{n,0}}{E}$. The arcs $A_{n,-1}$ and $A_{n,1}$ should be chosen adjacent to $A_{n,0}$ from the left and right respectively, and their lengths should be chosen, again, such that $|A_{n,-1}| = \dist{A_{n,-1}}{E}$ and $|A_{n,1}| = \dist{A_{n,1}}{E}$. It is easy to see that the correct choice is $|A_{n,1}| = |A_{n,-1}| = \frac{|A_n|}{6}.$  Proceeding in this manner, we will obtain a decomposition \[ \T \setminus E = \cup_{n} A_n = \cup_{n,k} A_{n,k}\] where for each arc $A_{n,k}$ we have \begin{equation} \label{A_nkdist}
|A_{n,k}| = \frac{|A_n|}{3\cdot 2^{|k|}} = \dist{A_{n,k}}{E}
\end{equation} A straight-forward computation based on \eqref{A_nkdist} will show that \begin{gather*} \sum_{n,k} |A_{n,k}| \log(1/|A_{n,k}|) < \infty. \end{gather*} Let $\{B_j\}_j$ be a re-labelling of the arcs $\{A_{n,k}\}_{n,k}$ and \{$\lambda_j\}_j$ a positive sequence tending to infinity such that \[ \sum_j \lambda_j |B_j| \log(1/|B_j|) < \infty. \] Now let $r_j = 1 + |B_j|$, $b_j \in \T$ be the midpoint of the arc $B_j$, and consider the function \begin{equation} \label{hfunc} h(z) = - \sum_{j} h_j(z) =  - \sum_j \frac{\lambda_j b_j|B_j| \log(1/|B_j|)}{r_jb_j - z}, \quad z \in \D. \end{equation} It is not hard to see that the real part of $h(z)$ is negative in $\D$. In fact, the real part of the $j$:th term in the sum is \begin{gather*} -\Re h_j(z) = -\lambda_j |B_j| \log(1/|B_j|)\frac{\Re (r_j - \conj{z}b_j)}{|r_jb_j - z|^2} < 0, \end{gather*} where the last inequality follows from $\Re (r_j - \conj{z}b_j) < 0$, which is a consequence of the inequalities $r_j > 1$ and $|\conj{z}b_j| < 1$. It follows that \begin{equation}\label{geq} g(z) := \exp(h(z))\end{equation} is bounded by 1 in modulus for $z \in \D$. Moreover, the series defining $h(z)$ converges also for $z \in B_j$, and $h_j$ extends analytically across each $B_j$, because the poles $\{r_jb_j\}_j$ of $h$ cluster only at the set $E$. For $z \in B_j$, we have that the quantities $|r_jb_j - z|$ and $\Re(r_j - \conj{z}b_j)$ are both approximately equal to $|B_j|$, and so \[ |g(z)| \leq \exp( -\Re h_j(z)) \leq \exp( - c \lambda_j \log(1/|B_j|))  = |B_j|^{c\lambda_j}\] for some positive constant $c$. Since $|B_j|$ equals the distance from $B_j$ to $E$, for $z \in B_j$ we obtain \[ |g(z)| \leq C \dist{z}{E}^{c\lambda_j} \] for some positive constant $C > 0$ independent of $j$. Note that as $z$ tends to $E$ along the complement $\T \setminus E$, it needs to pass through infinitely many intervals $B_j$. Since $\lambda_j$ tends to infinity, we obtain that \begin{equation} \label{gest}
|g(z)| = o(\dist{z}{E}^N)
\end{equation} as $z \to E$ along the complement of $E$ on $\T$, for any choice of positive integer $N$. 

Clearly $G(t) := g(e^{it})$ is smooth on $\T \setminus E$. On this set, the derivatives $G^{(m)}(t)$ have the form $H(e^{it})G(t)$, where $H$ is a linear combination of products of derivatives of $h(e^{it})$ with respect to $t$. But a glance at \eqref{hfunc} shows that such a product cannot grow faster than a constant multiple of $\dist{e^{it}}{E}^{-n}$ for $e^{it} \in \T \setminus E$, for some integer $n$ depending only on the number of derivatives taken. Together with \eqref{gest}, we see that the claim in the lemma follows. 
\end{proof}

Note the fact that the proof above gives an explicit computable formula for the cut-off function $g$. It is given in terms of the Beurling-Carleson set $E$ and is presented in equations \eqref{hfunc} and \eqref{geq}.

\section{A constructive proof of Khrushchev's theorem}
\label{constrproofsec}

\subsection{Smooth Cauchy transforms} As before, let $E$ be a Beurling-Carleson set of positive measure. \thref{mainlemma} will allow us to construct, and give explicit formulas for, measurable functions supported on $E$ which have a smooth Cauchy transform. Thus we will now give the promised constructive proof of the theorem of Khrushchev from his seminal work \cite{khrushchev1978problem}. We state the theorem in the following somewhat more general form than it is stated in the mentioned work. 

\begin{prop}{\textbf{(Construction of smooth Cauchy transforms)}} \thlabel{constrKhru} Let $E$ be a Beurling-Carleson set of positive measure such that $E \neq \T$, and $w$ be a bounded positive measurable function with support on $E$ which satisfies $\int_E \log(w) d\m > -\infty$. Let $W$ be the outer function \begin{equation} \label{Wformula}
W(z) = \exp\Big( \int_E \frac{z + \conj{\zeta}}{z-\conj{\zeta}}\log(w(\zeta))d\m(\zeta) \Big)
\end{equation} and $g$ be the function associated to $E$ which is given by \thref{mainlemma}. Consider the set \begin{equation}
\label{setK}
K = \Big\{ s = \conj{\zeta p g W} : p \text{ analytic polynomial } \Big\} 
\end{equation} consisting of functions on $\T$, where $\zeta$ is the coordinate function on $\T$. Then the Cauchy transform \begin{equation}
    \label{Cseq} C_{s1_E}(z) := \int_E \frac{s(\zeta)}{1-z\conj{\zeta}}d\m(\zeta)
\end{equation} is a non-zero function in $\A^\infty$ for each non-zero $s \in K$, the restrictions to $E$ of elements of the set $K$ form a dense subset of $L^2(1_E d\m)$, and the set \begin{equation} \label{setCK} C_EK := \{ C_{s1_E} : s \in K \} \end{equation} is dense in $H^2$. 
\end{prop}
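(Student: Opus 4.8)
The plan is to reduce all three claims to a single ``cut-off'' identity. Fix a nonzero $s \in K$, say $s = \conj{\zeta p g W}$, and set $f := \zeta p g W$. Since $|g| \leq 1$ on $\D$, $w$ is bounded (so $|W| \leq \max(\|w\|_\infty, 1)$ and $W \in H^\infty$), and $p$ is a polynomial, we have $f \in H^\infty \subseteq H^2$, while the factor $\zeta$ forces $f(0) = 0$. Expanding the kernel in \eqref{Cseq} gives $C_{s1_E}(z) = \sum_{n \geq 0} c_n z^n$ with $c_n = \int_E s\,\conj{\zeta}^n \, d\m$. Writing $s 1_E = \conj{f}\, 1_E = \conj{f} - \conj{f}\, 1_{\T \setminus E}$ and using $\frac{1}{1-z\conj\zeta} = \sum_{n\geq 0} z^n \conj\zeta^n$, a direct computation gives $\int_\T \frac{\conj{f(\zeta)}}{1 - z\conj\zeta} \, d\m(\zeta) = \conj{f(0)} = 0$, since $f \in H^2$. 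Consequently
\begin{equation*}
C_{s1_E}(z) = -\int_{\T \setminus E} \frac{\conj{f(\zeta)}}{1 - z\conj{\zeta}} \, d\m(\zeta) = \int_\T \frac{d(\zeta)}{1-z\conj{\zeta}}\,d\m(\zeta), \qquad d := -\conj{f}\, 1_{\T\setminus E},
\end{equation*}
so that $C_{s1_E}$ is the Cauchy transform of a density supported on the complementary arcs, where $g$ is forced to be small; this is where \thref{mainlemma} enters, and in particular $c_n = \widehat{d}(n)$.

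The main obstacle is to show that $d$ extends to a $C^\infty$ function on $\T$. On each complementary arc the boundary modulus $|W| = 1$ is constant, and the phase of $W$ is the harmonic conjugate of $\log w \cdot 1_E$; since this density is supported on $E$, the Schwarz integral representing $W$ shows that $W$ is real-analytic on each open arc of $\T \setminus E$, with the $t$-derivatives of $W(e^{it})$ bounded by a constant multiple of $\dist{e^{it}}{E}^{-k}$ for some $k$ depending only on the order of differentiation (the kernel $\frac{e^{it} + \conj\zeta}{e^{it} - \conj\zeta}$ contributes factors $|e^{it} - \conj\zeta|^{-k} \lesssim \dist{e^{it}}{E}^{-k}$, and $\int_E |\log w| \, d\m < \infty$ because $w$ is bounded and $\int_E \log w \, d\m > -\infty$). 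Combining this polynomial growth with the super-polynomial decay \eqref{Gest} of $G$ and all its derivatives from \thref{mainlemma}, together with the Leibniz rule and the smoothness of $p$ and $\zeta$, shows that $d$ and all its $t$-derivatives vanish to infinite order as $e^{it} \to E$. Hence $d \in C^\infty(\T)$ (extended by zero on $E$), its Fourier coefficients decay faster than any power, and therefore the Taylor coefficients $c_n = \widehat{d}(n)$ of $C_{s1_E}$ decay rapidly, placing $C_{s1_E} \in \A^\infty$.

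For non-vanishing I would argue by contradiction using the theorem of F.\ and M.\ Riesz. If $C_{s1_E} \equiv 0$, then $\widehat{d}(n) = 0$ for all $n \geq 0$, so $\conj{d} = -f\, 1_{\T \setminus E}$ has only strictly positive frequencies; in particular $f\, 1_{\T \setminus E} \in H^2$. Then $f\, 1_E = f - f\, 1_{\T \setminus E} \in H^2$ as well, and this function vanishes a.e.\ on the positive-measure set $\T \setminus E$ (recall $E \neq \T$), forcing $f\, 1_E \equiv 0$. Since $g = \exp(h)$ and the outer function $W$ are zero-free and $|\zeta| = 1$, this gives $p \equiv 0$ on $E$, hence $p \equiv 0$ and $s = 0$, contradicting the choice of $s$.

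Finally, I would obtain the two density statements together. For density of $K|_E$ in $L^2(1_E \, d\m)$, conjugation being an isometry of $L^2(1_E\,d\m)$, it suffices to show $\{\zeta g W p : p \text{ polynomial}\}$ is dense. If $\psi \in L^2(1_E\,d\m)$ is orthogonal to this set, then $\Phi := \psi \conj{\zeta g W} \in L^1(1_E \, d\m)$ satisfies $\int_E \Phi\, \conj{\zeta}^n \, d\m = 0$ for all $n \geq 0$; extending $\Phi$ by zero, $\conj{\Phi}$ lies in $H^1$ and vanishes on the positive-measure set $\T \setminus E$, so $\conj{\Phi} \equiv 0$ by F.\ and M.\ Riesz, and since $\zeta g W \neq 0$ a.e.\ on $E$ we conclude $\psi = 0$. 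For density of $C_E K$ in $H^2$, the coefficient formula above yields the pairing identity $\ip{C_{s1_E}}{F}_{H^2} = \int_E s\, \conj{F} \, d\m$ for $F \in H^2$; an $F$ annihilating $C_E K$ then has $F|_E$ orthogonal to $K|_E$ in $L^2(1_E\,d\m)$, whence $F = 0$ a.e.\ on $E$ by the density just established, and therefore $F \equiv 0$ once more by F.\ and M.\ Riesz.
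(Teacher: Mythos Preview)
Your argument is correct. The first two claims—smoothness and non-vanishing of $C_{s1_E}$—follow the paper's proof essentially verbatim: both flip the Cauchy integral to the complement $\T\setminus E$ using that $s=\conj{f}$ is conjugate-analytic with vanishing mean, show the resulting density is $C^\infty(\T)$ by playing the super-polynomial decay \eqref{Gest} of $g$ against the at-most-polynomial blow-up of the $t$-derivatives of $W(e^{it})$ near $E$, and read off rapid Fourier decay. Your non-vanishing argument via $f1_E\in H^2$ is the same F.\ and M.\ Riesz step the paper compresses into a single sentence.

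The two density claims, however, are handled differently. The paper invokes the Beurling--Wiener classification of $M_{\conj\zeta}$-invariant subspaces of $L^2(\T)$ for density of $K|_E$ in $L^2(1_E\,d\m)$, and Beurling's theorem on backward-shift-invariant subspaces of $H^2$ (together with the model-space description of $K_\theta$) for density of $C_EK$. Your route avoids both structure theorems: you prove density in $L^2(1_E\,d\m)$ by a direct annihilator argument (an orthogonal $\psi$ produces $\Phi 1_E\in L^1(\T)$ with vanishing non-negative Fourier coefficients, hence $\conj{\Phi 1_E}\in H^1_0$ vanishing on the positive-measure set $\T\setminus E$, so identically zero), and then reduce density of $C_EK$ in $H^2$ to the $L^2(1_E\,d\m)$ statement via the pairing identity $\ip{C_{s1_E}}{F}_{H^2}=\int_E s\,\conj{F}\,d\m$. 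This is more elementary and self-contained, needing only the $H^1$ uniqueness theorem; the paper's approach instead highlights the shift-invariance of $K$ and $C_EK$, which meshes with how these sets are used later (e.g.\ in \thref{rapidweight} and the Nagy--Foias calculus). One small imprecision: when you write ``$\conj\Phi$ lies in $H^1$'' you mean $\conj{\Phi 1_E}\in H^1_0$, but the conclusion is unaffected.
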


Certainly our more general form of the theorem, together with the density statements, is obtainable by Khrushchev's methods in \cite{khrushchev1978problem}. We therefore emphasize that our main contribution in this context are the explicit formulas for the measurable functions supported on $E$ for which the Cauchy transform is an analytic function in $\A^\infty$. More precisely, the formulas for the functions in $K$ are given by the equations \eqref{hfunc}, \eqref{geq} and \eqref{Wformula}. 

The density statements in \thref{constrKhru} will be useful for our further applications. It is not our point to prove these density statements constructively. In this part of the proof, we will use the following well-known theorem.

\begin{lemma}{(\textbf{Beurling-Wiener theorem})} \thlabel{beurling-wiener}
Let $M_{\conj{\zeta}}:L^2(\T) \to L^2(\T)$ be the operator of multiplication by $\conj{\zeta}$. The closed $M_{\conj{\zeta}}$-invariant subspaces of $L^2(\T)$ are of the form \[L^2(1_F d\m) = \{ f \in L^2(\T) : f = 0 \text{ almost everywhere on } \T \setminus F\}\] where $F$ is a measurable subset of $\T$, or of the form \[U \conj{H^2} = \{ U\conj{f} : f \in H^2\}\] where $U$ is a unimodular function.
\end{lemma}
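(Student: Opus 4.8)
The plan is to classify the closed subspaces $M \subseteq L^2(\T)$ satisfying $M_{\conj{\zeta}} M \subseteq M$ by dichotomizing on whether $M$ is \emph{reducing}, i.e.\ whether $\zeta M \subseteq M$ holds in addition. Note first that $M_{\conj{\zeta}}$ is a unitary operator on $L^2(\T)$, with inverse $M_\zeta$; this is the structural fact that drives everything. I would begin by observing that the two families listed in the statement are genuinely different in character: the spaces $L^2(1_F\,d\m)$ are exactly the \emph{reducing} subspaces (invariant under both $M_{\conj\zeta}$ and its inverse $M_\zeta$), while the spaces $U\conj{H^2}$ turn out to be the \emph{purely non-reducing} ones, on which $M_{\conj\zeta}$ acts as a proper (non-surjective) isometry. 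So the first step is to split into these two cases and treat them separately.

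For the reducing case, suppose $\zeta M = M$, equivalently $M$ is invariant under multiplication by every trigonometric polynomial, hence by every bounded measurable function after taking $L^2$-limits. Then $M$ is a module over $L^\infty(\T)$, and I would show $M = L^2(1_F\,d\m)$ by taking $F$ to be (up to null sets) the support of the functions in $M$: concretely, one lets $P$ be the orthogonal projection onto $M$, checks that $P$ commutes with every $M_\varphi$ for $\varphi \in L^\infty$, and concludes that $P$ is itself multiplication by an indicator $1_F$. The routine verification is that a bounded operator commuting with all multiplications is itself a multiplication operator, and that an idempotent multiplication must be by a $0/1$-valued function.

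For the non-reducing case, the key is to analyze the subspace $N := M \ominus \zeta M$. The hypothesis $\conj\zeta M \subseteq M$ rewrites, after multiplying by $\zeta$, as $M \subseteq \zeta M$; combined with the assumption that we are \emph{not} in the reducing case, we have $\zeta M \subsetneq M$, so $N \neq \{0\}$, and one checks $\zeta^n N \perp \zeta^m N$ for $n \neq m$ using unitarity of $M_\zeta$. The heart of the matter is to prove that $\dim N = 1$: if $u,v \in N$ are two unit vectors, a short computation with the orthogonality relations shows that the function $u\conj v$ has all its Fourier coefficients vanishing except possibly the $0$-th, forcing $u\conj v$ to be a constant a.e.; from $\norm{u}=\norm{v}=1$ one deduces $\abs{u}=\abs{v}=1$ a.e.\ and that $v$ is a unimodular multiple of $u$. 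Fixing a unit generator $U \in N$, necessarily unimodular, one then assembles $M = \bigoplus_{n \geq 0} \zeta^n N = U \cdot \overline{\operatorname{span}}\{\conj{\zeta}^n : n \geq 0\}$; here I would use that $\overline{\operatorname{span}}\{\conj\zeta^n : n\geq 0\} = \conj{H^2}$ in $L^2(\T)$, yielding $M = U\conj{H^2}$. Two points require care: establishing that the Wold-type orthogonal decomposition $M = \bigoplus_n \zeta^n N$ exhausts all of $M$ (one must rule out a nontrivial reducing remainder $\bigcap_n \zeta^n M$, which I expect to handle by showing any such remainder would make $M$ reducing, contradicting the case assumption), and verifying the unimodularity of $U$ carefully enough that $U\conj{H^2}$ is genuinely closed with $U$ acting isometrically.

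The main obstacle I anticipate is the one-dimensionality of the wandering subspace $N$ together with the unimodularity of its generator, since this is where the special one-variable structure of $L^2(\T)$ must be exploited rather than abstract Hilbert-space reasoning. The decisive computation is that for $u, v \in N$ the orthogonality $\zeta^n u \perp \zeta^m v$ for all $n \neq m$ forces $\int_\T u\conj v\, \conj\zeta^{\,k}\,d\m = 0$ for every nonzero integer $k$, so $u\conj v$ is constant; this simultaneously gives $\dim N = 1$ and, upon specializing to $u = v$, the identity $\abs{U}^2 = \text{const}$, which after normalization is unimodularity. Everything else is bookkeeping with the unitary $M_\zeta$ and the standard identification of $\conj{H^2}$ as the closed span of the nonpositive frequencies.
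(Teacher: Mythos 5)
The paper never proves this lemma; it is quoted as a known result with a pointer to \cite{helsonbook}, and what you have written is precisely the classical Helson argument found there: the reducing/simply-invariant dichotomy, the commutant argument identifying the projection onto a doubly invariant subspace as multiplication by an indicator $1_F$, and the Fourier-coefficient computation on the wandering subspace which yields $\dim N = 1$ and unimodularity of the generator in one stroke. Those core computations are correct, and the strategy is the right one.

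However, the non-reducing half contains a direction slip that, as literally written, kills the argument. From $\conj{\zeta}M \subseteq M$ you correctly derive $M \subseteq \zeta M$; the non-reducing case is therefore $M \subsetneq \zeta M$, equivalently $\conj{\zeta}M \subsetneq M$, and your assertion ``$\zeta M \subsetneq M$'' contradicts the inclusion you just derived. Worse, with your definition $N := M \ominus \zeta M$, the inclusion $M \subseteq \zeta M$ forces $N = M \cap (\zeta M)^\perp \subseteq \zeta M \cap (\zeta M)^\perp = \{0\}$, so this wandering subspace is always trivial. The correct choice is $N := M \ominus \conj{\zeta}M$; with that replacement every computation you describe goes through verbatim with $\conj{\zeta}$ in place of $\zeta$ (indeed your final formula already uses the span of $\{\conj{\zeta}^n\}_{n \geq 0}$), giving $U\conj{H^2} \subseteq M$. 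A second, smaller defect: your proposed elimination of the Wold remainder (``any such remainder would make $M$ reducing'') is not the right mechanism, since a nonzero remainder coexisting with $N \neq \{0\}$ would make $M$ a direct sum of a pure piece and a reducing piece, not reducing. The repair is already in your toolkit: if $g \in \bigcap_{n\geq 0}\conj{\zeta}^n M$, then this intersection is invariant under both $\zeta$ and $\conj{\zeta}$ and is contained in $\conj{\zeta}M \perp U$, so $\int_\T \zeta^k g\, \conj{U}\, d\m = 0$ for every integer $k$; hence all Fourier coefficients of $U\conj{g}$ vanish, and $|U| = 1$ a.e.\ gives $g = 0$. Alternatively you can bypass the Wold decomposition entirely, as Helson does: for $f \in M \ominus U\conj{H^2}$, orthogonality to $\conj{\zeta}^n U$ for $n \geq 0$ kills the Fourier coefficients of $f\conj{U}$ at indices $\leq 0$, while $\conj{\zeta}^n f \in \conj{\zeta}M \perp U$ for $n \geq 1$ kills those at indices $\geq 1$, so $f\conj{U} \equiv 0$ and $f \equiv 0$. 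With these two corrections your proof is complete and coincides with the one in the cited reference.
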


For a proof of the Beurling-Wiener theorem, see \cite{helsonbook}, for instance.

\begin{proof}[Proof of \thref{constrKhru}]
Since $s$ is a conjugate analytic and satisfies $\int_\T s d\m = 0$ we have \[\int_\T \frac{s(\zeta)}{1-z \conj{\zeta}}dm(\zeta) = 0\] for each $z \in \D$. This implies that \begin{equation} \label{flip}
C_{s1_E}(z) = \int_E \frac{s(\zeta)}{1-z\conj{\zeta}}d\m(\zeta) = -\int_{\T \setminus E} \frac{s(\zeta)}{1-z\conj{\zeta}}d\m(\zeta).
\end{equation} Consider now the function $S(t) : = s(e^{it})1_{\T \setminus E}(e^{it})$, where $1_{\T\setminus E}$ is the indicator function of the set $\T \setminus E$. From the formula \eqref{Wformula} for $W$ it is clear that this function extends analytically across $\T \setminus E$, and a simple differentiation argument shows that the derivatives in the variable $t$ of the function $W(e^{it})$ admit a bound \begin{equation}
    \label{Westimate} \Big|\frac{\partial^m }{\partial t^m}W(e^{it}) \Big| \leq C_m \cdot \dist{e^{it}}{E}^{-2m} 
\end{equation} for $e^{it} \in \T \setminus E$. Thus by \eqref{Gest} and \eqref{setK} the derivative in $t$ of any order of $S$ tends to zero as $e^{it}$ tends to $E$ along $\T \setminus E$, and it is not hard to see that the derivatives of $S$ vanish on $E$. Thus $S \in C^\infty(\T)$. It follows that the Fourier coefficients $S_n$ of $S$ satisfy $|S_n| \leq C|n|^{-M}$ for each positive integer $M$ and some constant $C = C(M) > 0$. Obviously then the function $C_{s1_E}(z) = \sum_{n = 0}^\infty S_nz^n$ is in $\A^\infty$. It is non-zero if $s$ is non-zero, because the positive Fourier coefficients cannot vanish for the function $s1_E$ which is identically zero on the set $\T \setminus E$ of positive Lebesgue measure. 

The density in $L^2(1_E d\m)$ of the restrictions to $E$ of elements of the set $K$ is an easy consequence of the invariance of $K$ under multiplication by $\conj{\z}$ and the Beurling-Wiener theorem, \thref{beurling-wiener} above. Indeed, the restriction to $E$ of an element of $K$ is non-zero almost everywhere on $E$, but obviously zero on $\T \setminus E$. It follows from \thref{beurling-wiener} that the closure of $K$ could not be anything else than $L^2(1_E d\m)$.

The set $C_EK$ is certainly contained in $H^2$, and the density in $H^2$ follows from the classical Beurling theorem for the Hardy spaces. More precisely, the set $C_EK$ is invariant under the backward shift operator \begin{equation}
    \label{backshift} f(z) \mapsto \frac{f(z)-f(0)}{z}.
\end{equation} Indeed, we have that \begin{equation} \label{Csbackshift}
\frac{C_{s1_E}(z) - C_{s1_E}(0)}{z} =  \int_E \frac{\conj{\zeta}s(\zeta)}{1-z\conj{\zeta}}d\m(\zeta) = C_{\conj{\zeta} {s1_E}}(z).    
\end{equation} By Beurling's theorem the closure of $C_EK$ is either all of $H^2$, or it coincides with a model space $K_\theta$ of functions which have boundary values on $\T$ of the form $\theta \conj{h}, h \in zH^2$, for some non-zero inner function $\theta$. If we would be in the second case, then there would exist a function $k \in zH^2$ such that on the circle $\T$ we would have the equality $s1_E = C_{s1_E} + \conj{k} = \theta \conj{h} + \conj{k}$, and consequently $\conj{\theta}s1_E \in \conj{H^2}$. This is a contradiction, since $\conj{\theta}s1_E$ vanishes on a set of positive measure. 
\end{proof}

\subsection{A technical improvement}

Sets of the form $K$ as in \eqref{setK} have another useful property, one which will be employed in the coming applications. The property is that the set $C_E K$ defined in \eqref{setCK} is contained in a single Hilbert space consisting purely of functions which are in $\A^\infty$. This applies to many sets similar to $K$, as we shall see next.

More precisely, take the function $s_0 := \conj{\zeta g W} \in K$, i.e, the one where $p = 1$ in \eqref{setK}. The only property of $K$ that we will use in the proof is that it is of the form \[\{ \conj{p}s_0 : p \text{ analytic polynomial} \}.\] For an analytic polynomial $p(z) = \sum_{n=0}^{d} p_nz^n$, define the operator \begin{equation}
    \label{PLeq} p(L) := \sum_{n=0}^{d} p_nL^n,
\end{equation} where $L$ is the backward shift operator $L$ defined in \eqref{backshift}. Every other element of $C_EK$ can be expressed as $p(L)C_{s_01_E}$ for some analytic polynomial. This claim is a consequence of the formula \begin{equation} \label{pLCs} p(L)C_{s_01_E}(z) = \int_{E} \frac{\conj{\zeta p g W}}{1 - z\conj{\zeta}}  d\m(\zeta) \end{equation} which, in turn, is a consequence of \eqref{Csbackshift}. Thus the Taylor coefficients in the family $C_EK$ have similar asymptotic behaviour, and we exploit this fact in the following way. Being a function in $\A^\infty$, the Taylor coefficients $\{ S_k \}_{k=0}^\infty$ of $C_{s_01_E}$ satisfy \begin{equation}
    \sum_{k=0}^\infty k^N |S_k|^2 < \infty
\end{equation} for all positive integers $N$. It follows that for each $N \geq 1$, there exists a positive integer $K(N)$ such that \begin{equation}
    \sum_{k=K(N)}^\infty k^N |S_k|^2 < \frac{1}{2^N}.
\end{equation} Let $K(0) = 0$ and define a sequence $\{\alpha_k\}_{k=0}^\infty$ by \[ \alpha_k = k^N, \quad K(N) \leq k < K(N+1). \] This sequence is increasing, and satisfies \begin{gather} \label{s0wk}
\sum_{k=0}^\infty \alpha_k|S_k|^2 = \sum_{N=0}^\infty \sum_{k=K(N)}^{K(N+1)-1} k^N |S_k|^2 \leq \sum_{N=0}^\infty \frac{1}{2^N} < \infty.
\end{gather} Moreover, since $\alpha_k \geq k^{N+1}$ if $k \geq K(N+1)$, we have that \begin{equation} \label{rapidgrowthwk}
\lim_{k \to \infty} \frac{\alpha_k}{k^N} \geq \lim_{k \to \infty} \frac{k^{N+1}}{k^N} = \infty
\end{equation} for any positive integer $N$.

\begin{definition} A sequence of positive numbers $\bm{\alpha} = \{ \alpha_k\}_{k=0}^\infty$ is \textit{rapidly increasing} if \begin{equation} 
\lim_{k \to \infty} \frac{\alpha_k}{k^N} = \infty
\end{equation} holds for each positive integer $N$.\end{definition}

Thus we have constructed above a rapidly increasing sequence. In the coming application, we will also need the very mild condition \[\lim_{k \to \infty} \alpha_k^{1/k} = 1 \] which we can safely assume. Indeed, by replacing $\alpha_k$ by $\min (\alpha_k, k^{\sqrt{k}})$, we still have a sequence which is rapidly increasing, and moreover \[ 1 \leq \lim_{k\to \infty} \alpha_k^{1/k} \leq  \lim_{k \to \infty} \exp(\log(k)/\sqrt{k}) = 1. \]

We now make a somewhat trivial observation, which will however be important in the sequel. Because the sequence $\bm{\alpha} = \{\alpha_k\}_{k=0}^\infty$ is increasing, it also follows that whenever an analytic function $f$ has a Taylor series which satisfies \eqref{s0wk}, then so does the backward shift $Lf$ of this function. Thus also $p(L)f$ satisfies this property, for all analytic polynomials $p$ (and in fact, so does appropriately defined $h(L)f$ for any bounded analytic function $h$, see \thref{ToeplitzXalpha} below). Using also the formula \eqref{pLCs}, we have proved the following technical result.

\begin{prop}  \thlabel{rapidweight}
Let $s_0$ be a measurable function on $\T$ for which the Cauchy transform $C_{s_0}$ is a function in $\A^\infty$. 
Then there exists a rapidly increasing sequence $\bm{\alpha} = \{\alpha_k\}_{k=1}^\infty$ satisfying \[\lim_{k \to \infty} a_k^{1/k} = 1 \] and such that \[ \sum_{k=0}^\infty \alpha_k|f_k|^2 < \infty \] for all functions $f$ which are Cauchy transforms $f = C_s$ of a function $s$ from the set \[ \{ s = \conj{p} s_0 : p \text{ analytic polynomial } \}. \]
\end{prop}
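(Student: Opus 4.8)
The plan is to follow exactly the construction laid out in the paragraphs preceding \thref{rapidweight}, since the proposition is essentially a clean restatement of that construction with the hypothesis weakened from ``$s_0 \in K$'' to ``$C_{s_0}$ is an arbitrary member of $\A^\infty$.'' First I would record the only structural fact I actually need about membership in $\A^\infty$: if $C_{s_0} \in \A^\infty$ then its Taylor coefficients $\{S_k\}_{k=0}^\infty$ decay faster than any polynomial rate, so that
\begin{equation}
\sum_{k=0}^\infty k^N |S_k|^2 < \infty \quad \text{for every } N \geq 0.
\end{equation}
This is standard: since every derivative $C_{s_0}^{(m)}$ extends continuously to $\cD$, each derivative lies in $H^2$, and $\|C_{s_0}^{(m)}\|_{H^2}^2$ is comparable to $\sum_k (k(k-1)\cdots(k-m+1))^2|S_k|^2$; summing a few such quantities dominates $\sum_k k^N|S_k|^2$ for any fixed $N$.

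Next I would construct the weight $\bm{\alpha}$ by the diagonal argument already displayed in the excerpt. Using the summability above, for each $N \geq 1$ choose an integer $K(N)$ with $\sum_{k \geq K(N)} k^N|S_k|^2 < 2^{-N}$, set $K(0)=0$, and define $\alpha_k = k^N$ on the block $K(N) \leq k < K(N+1)$. The estimate \eqref{s0wk} then gives $\sum_k \alpha_k |S_k|^2 \leq \sum_N 2^{-N} < \infty$, and \eqref{rapidgrowthwk} shows $\alpha_k/k^N \to \infty$ for every $N$, so $\bm{\alpha}$ is rapidly increasing. To secure the extra condition $\lim_k \alpha_k^{1/k}=1$, I would replace $\alpha_k$ by $\min(\alpha_k, k^{\sqrt{k}})$, which preserves rapid increase and forces $1 \leq \lim_k \alpha_k^{1/k} \leq \lim_k \exp(\log(k)/\sqrt{k}) = 1$, exactly as in the paragraph above the proposition. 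Note $\bm{\alpha}$ is constructed to be nondecreasing, a property I will lean on in the final step.

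The remaining task is to propagate the weighted summability from $C_{s_0}$ to every $f = C_{\conj{p}s_0}$. Here the key identity is \eqref{pLCs}, which says $C_{\conj{p}s_0} = p(L)C_{s_0}$, where $L$ is the backward shift of \eqref{backshift}; this follows by iterating \eqref{Csbackshift}. So it suffices to show that the weighted-$\ell^2$ condition $\sum_k \alpha_k|f_k|^2<\infty$ is preserved under $L$, since $p(L)$ is a finite linear combination of powers of $L$. But $L$ just shifts Taylor coefficients down by one index, so if $g = Lf$ then $g_k = f_{k+1}$, and because $\bm{\alpha}$ is nondecreasing we get $\alpha_k|g_k|^2 = \alpha_k|f_{k+1}|^2 \leq \alpha_{k+1}|f_{k+1}|^2$, whence $\sum_k \alpha_k|g_k|^2 \leq \sum_k \alpha_k|f_k|^2 < \infty$. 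Applying this repeatedly and taking linear combinations covers all $p(L)C_{s_0}$, completing the proof.

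I do not expect a genuine obstacle here, since the argument is verbatim the computation already presented before the statement; the only point requiring care is making sure the monotonicity of $\bm{\alpha}$ is invoked correctly in the shift-invariance step, and that the two desired properties of $\bm{\alpha}$ (rapid increase and $\alpha_k^{1/k}\to 1$) survive the truncation $\alpha_k \mapsto \min(\alpha_k,k^{\sqrt{k}})$ simultaneously. If one wanted the stronger closure under $h(L)$ for bounded analytic $h$ alluded to parenthetically, that would require the separate result \thref{ToeplitzXalpha}, but it is not needed for the statement as written.
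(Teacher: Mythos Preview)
Your proposal is correct and follows essentially the same approach as the paper: the paper's proof is precisely the computation displayed in the paragraphs preceding the proposition (culminating in the sentence ``we have proved the following technical result''), and you have reproduced that argument faithfully, including the diagonal construction of $\bm{\alpha}$, the truncation by $k^{\sqrt{k}}$, and the propagation to $C_{\conj{p}s_0}=p(L)C_{s_0}$ via monotonicity of $\bm{\alpha}$.
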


\section{Weighted sequence spaces}

We will explore the Hilbert spaces implicitly appearing in \thref{rapidweight} a little more, and prove a few basic facts about their duality and operators acting on them. All results in this section are certainly well-known. The main results of the following Sections \ref{uncertaintysec} and \ref{permanencesec} will be stated in the context of these Hilbert spaces.

\subsection{Definition and duality} \label{Xalphaduality} For a sequence of positive numbers $\bm{\alpha} = \{ \alpha_k\}_{k=0}^\infty$, we define the Hilbert space $X(\bm{\alpha})$ to consist of formal power series $f(z) = \sum_{k=0}^\infty f_nz^n$ which satisfy \begin{equation}
    \label{halphanorm}
    \|f\|^2_{X(\bm{\alpha})} := \sum_{k=0}^\infty \alpha_k|f_k|^2 < \infty.
\end{equation} It is obvious that if $\bm{\alpha}$ is rapidly increasing, then $X(\bm{\alpha}) \subset \A^\infty$. We define the \textit{dual sequence} $\bm{\alpha^{-1}}$ by the equation \[ \bm{\alpha^{-1}} := \{ \alpha_k\}_{k=0}^\infty. \] The space $X(\bm{\alpha^{-1}})$ is isometrically isomorphic to the dual space of $X(\bm{\alpha})$ under the pairing which maps $f \in X(\bm{\alpha})$, $g \in X(\bm{\alpha^{-1}})$ to the complex number
\begin{equation} \label{Xalphapairing}
   \ip{f}{g} := \sum_{k=0} f_k\conj{g_k},
\end{equation} where the sequences $\{f_k\}_{k=0}^\infty$ and $\{g_k\}_{k=0}^\infty$ are the coefficients in the formal power series expansions of $f$ and $g$ respectively. 

In fact, for us the spaces $X(\bm{\alpha})$ and $X(\bm{\alpha}^{-1})$ will always be genuine spaces of analytic functions on $\D$. Indeed, a property which ensures this is $\lim_{k \to \infty} \alpha_k^{1/k} = 1$. The sequences appearing in our context will be the ones constructed in \thref{rapidweight} and their dual sequences, so we can safely assume below that this assumption is always satisfied. To see indeed that the assumption $\lim_{k \to \infty} \beta_k^{1/k} = 1$ implies that the radius of convergence of a formal power series $f \in X(\bm{\beta})$ is equal to at least 1, we compute \[ \limsup_{k\to \infty} |f_k|^{1/k} = \limsup_{k \to \infty} \frac{ (\beta_k |f_k|^2)^{1/2k}}{\beta_k^{1/2k}} \leq \limsup_{k \to \infty} \frac{1}{\beta_k^{1/2k}}  = 1,\] where in the next-to-last step we used that \[\lim_{k \to \infty} \beta_k|f_k|^2 = 0,\] so that \[\limsup_{k \to \infty} \, (\beta_k|f_k|^2)^{1/2k} \leq 1. \] 

Finally, an obvious but important property of the presented duality pairing is that if $f$ and $g$ happen to be functions in $H^2$, then we have that \eqref{Xalphapairing} equals \[ \ip{f}{g} = \int_\T f\conj{g} \, d\m. \] In other words, the duality pairing coincides with the usual $L^2(\T)$-duality pairing in the case $f$ and $g$ are functions in $H^2$. We shall often implicitly use this property.

\subsection{Toeplitz operators}

The usual Toeplitz operator with symbol $h \in L^\infty(\T)$ acts on an $H^2$ function $f$ by the formula \[ T_{h} f(z) := \int_{\T} \frac{f(\zeta)h(\zeta)}{1-\conj{\zeta}z}\,d\m(\zeta). \] If $p$ is a polynomial, then \begin{equation}
    \label{toeplitzfoias}
    T_{\conj{p}} f = \tilde{p}(L) f,
\end{equation} where $\tilde{p}(L)$ is defined according to \eqref{PLeq} and where $\tilde{p}(z) = \conj{p(\conj{z})}$. If $h$ is in $H^\infty$, then we can equivalently define the operator $T_h$ as the mapping taking the function $f(z), z \in \D$, to the function $h(z)f(z), z \in \D$. We denote by $M_h$ the multiplication operator \[ M_h f(z) = h(z)f(z), \quad z \in \D\] which acts on the space of all holomorphic functions on $\D$. If $h$ is analytic and $f \in H^2$, then it is well-known that $T_hf = M_hf$. We say that the Toeplitz operator is \textit{co-analytic}, or \textit{has a co-analytic symbol}, if it is of the form $T_{\conj{h}}$ for $h \in H^\infty$.

\begin{prop} \thlabel{ToeplitzXalpha}
Let $\bm{\alpha}$ be a sequence of positive numbers such that the corresponding $X(\bm{\alpha})$ space consistis of analytic functions in $\D$. 
\begin{enumerate}[(i)]
    \item If $\bm{\alpha}$ is increasing, then $X(\bm{\alpha})$ is continuously contained in $H^2$, and the Toeplitz operators with bounded co-analytic symbols are bounded on $X(\bm{\alpha})$.
    \item If $\bm{\alpha}$ is decreasing, then the operators $M_h$ with bounded analytic symbols are bounded on $X(\bm{\alpha})$.
\end{enumerate}

In both cases, the corresponding operators have a norm which is less than or equal to the supremum norm of the corresponding symbol. 
\end{prop}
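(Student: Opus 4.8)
The plan is to reduce the two assertions to one another by duality and then to prove the surviving one by combining a contraction property of a shift operator with von Neumann's inequality. First I would dispose of the elementary containment in (i): if $\bm{\alpha}$ is increasing then $\alpha_k \geq \alpha_0 > 0$ for every $k$, so $\|f\|_{H^2}^2 = \sum_k |f_k|^2 \leq \alpha_0^{-1}\|f\|_{X(\bm{\alpha})}^2$, which is exactly the continuous inclusion $X(\bm{\alpha}) \subset H^2$.

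Next comes the duality observation. Using the pairing \eqref{Xalphapairing}, a direct computation on Taylor coefficients shows that for $h \in H^\infty$ the co-analytic Toeplitz operator $T_{\conj{h}}$ acting on $X(\bm{\alpha})$ is the Banach-space transpose of the multiplication operator $M_h$ acting on the dual space $X(\bm{\alpha^{-1}})$; indeed $\ip{T_{\conj{h}}f}{g} = \ip{f}{M_h g}$ follows by matching the coefficient of each monomial, using $T_{\conj{h}} = \tilde{h}(L)$ as in \eqref{toeplitzfoias} on one side and the Cauchy product on the other. Since $\bm{\alpha}$ is increasing exactly when $\bm{\alpha^{-1}}$ is decreasing, and a bounded operator and its transpose have equal norm, statement (i) for an increasing $\bm{\alpha}$ is equivalent to statement (ii) for the decreasing sequence $\bm{\alpha^{-1}}$. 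Moreover the symbols match in size, $\|\conj{h}\|_{L^\infty(\T)} = \|h\|_{L^\infty(\T)} = \|h\|_\infty$, so the norm bound transfers as well. It therefore suffices to prove (ii). (One could equally prove (i) directly by the same scheme, observing that the backward shift $L$ is a contraction on $X(\bm{\alpha})$ when $\bm{\alpha}$ is increasing.)

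To prove (ii) I would first note that the forward shift $M_z$ is a Hilbert-space contraction on $X(\bm{\alpha})$ when $\bm{\alpha}$ is decreasing: since $(M_zf)_k = f_{k-1}$,
\[ \|M_zf\|_{X(\bm{\alpha})}^2 = \sum_{k\geq 1}\alpha_k|f_{k-1}|^2 = \sum_{j\geq 0}\alpha_{j+1}|f_j|^2 \leq \sum_{j\geq 0}\alpha_j|f_j|^2 = \|f\|_{X(\bm{\alpha})}^2, \]
where the inequality uses $\alpha_{j+1}\leq\alpha_j$. For a polynomial $p$ we have $M_p = p(M_z)$, so von Neumann's inequality gives $\|M_p\| \leq \sup_{\cD}|p| = \|p\|_\infty$.

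The main obstacle is the passage from polynomial symbols to a general $h\in H^\infty$, since polynomials are not dense in $H^\infty$ for the supremum norm. I would handle this by dilation: for $0<r<1$ the function $h_r(z):=h(rz)$ is analytic in a neighbourhood of $\cD$, so its Taylor partial sums $p_N$ converge to it uniformly on $\cD$; hence $M_{p_N}\to M_{h_r}$ in operator norm, and $\|M_{h_r}\|\leq \lim_N \sup_{\cD}|p_N| = \sup_{\cD}|h_r|\leq\|h\|_\infty$. Finally, for a fixed $f\in X(\bm{\alpha})$ the products $h_rf$ converge to $hf$ coefficientwise as $r\to1$ while obeying the uniform bound $\|h_rf\|_{X(\bm{\alpha})}\leq\|h\|_\infty\|f\|_{X(\bm{\alpha})}$; since the weighted norm \eqref{halphanorm} is lower semicontinuous under coefficientwise convergence — first compare finitely many coefficients, then let the truncation index tend to infinity — this yields $hf\in X(\bm{\alpha})$ with $\|M_h\|\leq\|h\|_\infty$. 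This completes (ii), and with it, by the duality above, statement (i).
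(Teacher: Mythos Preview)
Your argument is correct, and it follows a genuinely different route from the paper's own proof. The paper establishes (i) first: since $\bm{\alpha}$ is increasing the backward shift $L$ is a contraction on $X(\bm{\alpha})$, and the authors invoke the full Nagy--Foias $H^\infty$-functional calculus for completely non-unitary contractions to define $h(L)$ with $\|h(L)\|\leq\|h\|_\infty$, then identify $h(L)$ with $T_{\conj{\tilde h}}$ via Fej\'er approximation and SOT convergence. Part (ii) is afterwards deduced by computing the adjoint of $T_{\conj{h}}$ with respect to the pairing \eqref{Xalphapairing} on reproducing kernels $s_\lambda$. You reverse the order: you prove (ii) directly from the contractivity of the \emph{forward} shift $M_z$ on $X(\bm{\alpha})$ via von Neumann's inequality for polynomial symbols, and then pass to general $h\in H^\infty$ by the dilation $h_r(z)=h(rz)$ together with lower semicontinuity of the weighted norm under coefficientwise limits; (i) then drops out of the duality $T_{\conj{h}}=M_h^{\,*}$.

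Your approach is somewhat more elementary, since von Neumann's inequality is lighter machinery than the Nagy--Foias calculus (in particular you never need to check that the shift is completely non-unitary), and the dilation-plus-Fatou step is self-contained. The paper's approach, on the other hand, packages the passage from polynomials to $H^\infty$ inside the abstract functional calculus and in the process identifies the co-analytic Toeplitz operators explicitly as the range of that calculus, which is a slightly stronger structural statement. One minor point worth tightening in your write-up: when you claim $M_{p_N}\to M_{h_r}$ in operator norm, it is cleanest to first observe that $\{M_{p_N}\}_N$ is Cauchy (since $\|M_{p_N-p_M}\|\leq\|p_N-p_M\|_\infty$) and then identify the limit pointwise on $\D$, rather than bounding $\|M_{p_N-h_r}\|$ directly before $M_{h_r}$ has been shown to exist.
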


\begin{proof}
We prove $(i)$. It is clear that $X(\bm{\alpha})$ is continuously contained in $H^2$. A direct computation shows that $\bm{\alpha}$ being increasing implies that the backward shift operator $L$ in \eqref{backshift} is a contraction on $X(\bm{\alpha})$. There certainly exists no subspace of $X(\bm{\alpha})$ on which $L$ acts as a unitary (or even an isometry), so the Nagy-Foias functional calculus (see \cite{nagyfoiasharmop} for details) allows us to define the operator \[h(L): X(\bm{\alpha}) \to X(\bm{\alpha})\] for any bounded analytic function $h$, in such a way that the definition is consistent with \eqref{PLeq} for polynomials $h$, the operator norm of $h(L)$ is at most $\|h\|_{\infty}$, and if \[\lim_{n \to \infty} h_n(\zeta) \to h(\zeta)\] almost everywhere on $\T$ and \[\sup_n \|h_n\|_\infty < \infty,\] then $h_n(L)$ converges in the strong operator topology to $h(L)$. The operators $h(L)$ are co-analytic Toeplitz operators with symbol $\conj{\tilde{h}}$. To see this, fix $h \in H^\infty$ and let $\{h_n\}_n$ be the Fej\'er polynomials for $h$, so that the above properties of the Nagy-Foias functional calculus imply that $h_n(L)f \to h(L)f$ in the norm of $X(\bm{\alpha})$, for any $f \in X(\bm{\alpha})$. The same is true in the norm of $H^2$. Let $\tilde{h}(z) = \conj{h(\conj{z})}$, and recall \eqref{toeplitzfoias}. For $z \in \D$, we get  

\[ h(L)f(z) = \lim_{n \to \infty} h_n(L)f(z) = \lim_{n \to \infty} T_{\conj{\tilde{h_n}}}f(z) = T_{\conj{\tilde{h}}} f(z).
\] Thus $h(L) = T_{\conj{\tilde{h}}}$, and by reversing roles of $h$ and $\tilde{h}$, we see that the Nagy-Foias functional calculus for $L$ on $X(\bm{\alpha})$ is a bijection onto the co-analytic Toeplitz operators.

Next, we prove $(ii)$. If $\bm{\alpha}$ is decreasing, then the dual sequence $\bm{\alpha^{-1}}$ is increasing, so by $(i)$ we can define $T_{\conj{h}}^*: X(\bm{\alpha}) \to X(\bm{\alpha})$ as the adjoint of $T_{\conj{h}}: X(\bm{\alpha^{-1}}) \to X(\bm{\alpha^{-1}})$ with respect to our duality pairing \eqref{Xalphapairing} between the spaces. Let $f \in X(\bm{\alpha})$, $\lambda \in \D$, and $s_\lambda(z) = \frac{1}{1-\conj{\lambda}z}$. Recall that $s_\lambda$ is an eigenvector of $T_{\conj{h}}$, with eigenvalue $\conj{h(\lambda)}$. We compute

\begin{gather*}
    T_{\conj{h}}^*f (\lambda) = \ip{T_{\conj{h}}^*f}{s_\lambda} = \ip{f}{T_{\conj{h}}s_\lambda } = h(\lambda)\ip{f}{s_\lambda} = h(\lambda)f(\lambda).
\end{gather*}
Thus the adjoints of the co-analytic Toeplitz operators on $X(\bm{\alpha^{-1})}$ are multiplication operators on $X(\bm{\alpha)}$. The operator norm of $T_{\conj{h}}^*$ equals to operator norm of $T_{\conj{h}}$, which is at most $\|h\|_\infty$, as was noted in the proof of part $(i)$. The proof is complete.
\end{proof}

\section{Completely non-isometric shifts} 

\label{uncertaintysec}

\subsection{A big Hilbert space with a completely non-isometric shift}

In the next proposition we construct a Hilbert space of analytic functions on $\D$ which has desirable properties and which is strictly larger than any space $\Po^2(\mu)$ with measure $\mu$ being of the form $$d \mu = d\mu_C := (1-|z|^2)^C dA + w d\m$$ and $C$ being any positive number. This Hilbert space will play an important role in the proof of the main result of Section \ref{dbrsec}. Another application is presented in \thref{uncertstrong} below. We note that a similar result certainly can be reached by methods of Khrushchev developed in \cite{khrushchev1978problem}, but our proof below is different, and relies fully on construction of smooth Cauchy transforms.

\begin{prop} \thlabel{XdiagProp}
Let $E$ be a Beurling-Carleson set of positive Lebesgue measure, and $w$ be a bounded positive measurable function which is supported on $E$ and satisfies $\int_E \log(w) d\m > -\infty$. For a sequence $\bm{\alpha}$, consider the product space \[X(\bm{\alpha}^{-1}) \oplus L^2(w \, d\m)\] and the norm closure \[\mathcal{D}(\bm{\alpha}^{-1}, w)\] of the linear manifold \[ \{ (p,p) \in X(\bm{\alpha}^{-1}) \oplus L^2(w \, d\m) : p \text{ analytic polynomial }\}. \] There exists a rapidly increasing sequence $\bm{\alpha} = \{\alpha_k\}_{k=0}^\infty$ such that the space $\mathcal{D}(\bm{\alpha}^{-1}, w)$ has the following property: $f_1 \equiv 0$ implies that $f_2 \equiv 0$, for any tuple $(f_1, f_2) \in \mathcal{D}(\bm{\alpha}^{-1}, w)$.
\end{prop}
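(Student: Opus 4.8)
The plan is to fix the sequence $\bm{\alpha}$ by applying \thref{rapidweight} to $s_0 := \conj{\zeta g W}$, where $g$ is the cut-off function of \thref{mainlemma} and $W$ is the outer function \eqref{Wformula}, and then to establish the graph property by a duality argument in which the only obstruction to $f_2 \equiv 0$ is removed precisely by the smooth Cauchy transforms of \thref{constrKhru}. The application of \thref{rapidweight} produces a rapidly increasing $\bm{\alpha}$ with $\lim_k \alpha_k^{1/k} = 1$ such that $C_{s1_E} \in X(\bm{\alpha})$ for every $s = \conj{p}s_0$; since $\conj{p}s_0 = \conj{\zeta p g W}$, this means $C_{s1_E} \in X(\bm{\alpha})$ for every $s \in K$. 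This is the sequence asserted in the statement.

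Assume now $(f_1, f_2) \in \mathcal{D}(\bm{\alpha}^{-1}, w)$ with $f_1 \equiv 0$; I want $f_2 \equiv 0$ in $L^2(w\,d\m)$. Since $\mathcal{D}(\bm{\alpha}^{-1}, w)$ is the closure of the diagonal manifold $\{(p,p)\}$, every continuous functional annihilating this manifold must also annihilate $(0, f_2)$. Using the duality of Section \ref{Xalphaduality}, a functional on $X(\bm{\alpha}^{-1}) \oplus L^2(w\,d\m)$ has the form $(u,v) \mapsto \langle u, \phi\rangle + \int_E v\,\conj{\psi}\,w\,d\m$ with $\phi \in X(\bm{\alpha})$ and $\psi \in L^2(w\,d\m)$. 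Evaluating on the monomials $(z^n, z^n)$ and reading off Taylor coefficients, one finds that the functional annihilates the diagonal exactly when $\phi = -C_{\psi w 1_E}$. The admissibility requirement $\phi \in X(\bm{\alpha})$ is the crucial constraint: it demands that this Cauchy transform be smooth in the strong sense encoded by the rapidly increasing weight.

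This is exactly where the construction pays off. For each $s \in K$ put $\psi_s := s/w$; then $\psi_s \in L^2(w\,d\m)$ because $|s| = |pg|\sqrt{w}$ on $E$ (as $|W|^2 = w$ there), and $\psi_s w 1_E = s$, so $\phi_s := -C_{\psi_s w 1_E} = -C_{s1_E}$ lies in $X(\bm{\alpha})$ by the choice of $\bm{\alpha}$. Hence each pair $(\phi_s, \psi_s)$ gives a bona fide functional annihilating the diagonal, and evaluating it at $(0, f_2)$ yields $\int_E f_2\,\conj{\psi_s}\,w\,d\m = \int_E f_2\,\conj{s}\,d\m = 0$ for every $s \in K$. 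Writing $\conj{s} = \zeta p g W$ with $p$ an arbitrary polynomial, these identities say that $F := f_2\,\zeta g W 1_E$, which belongs to $L^1(\T)$, has vanishing Fourier coefficients of all indices $\le 0$; thus $F \in H^1$. But $F$ vanishes on $\T \setminus E$, a set of positive measure since $E \neq \T$, so by the uniqueness theorem for the Hardy class (a nonzero $H^1$ function cannot vanish on a set of positive Lebesgue measure) we conclude $F \equiv 0$. As $\zeta$, the outer function $W$, and the nonzero bounded function $g$ are all nonzero a.e. on $\T$, this forces $f_2 = 0$ a.e. on $E$, i.e. $f_2 \equiv 0$ in $L^2(w\,d\m)$.

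I expect the heart of the matter to be the middle step: recognizing that the functionals killing the diagonal are parametrized by Cauchy transforms $C_{\psi w 1_E}$, and that the membership constraint $\phi \in X(\bm{\alpha})$ is precisely a smoothness demand which the family $K$ was engineered to satisfy. The concluding Hardy-space uniqueness argument is the clean way to cope with the fact that $L^2(w\,d\m)$ is genuinely larger than $L^2(1_E\,d\m)$ when $w$ is small near parts of $E$; it is what guarantees that testing against the smooth Cauchy transforms alone already separates $f_2$ from $0$.
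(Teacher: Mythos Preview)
Your proof is correct and follows the paper's strategy: pick $\bm{\alpha}$ by applying \thref{rapidweight} to $s_0=\conj{\zeta gW}$, use the family $K$ of \thref{constrKhru} to produce continuous functionals on $X(\bm{\alpha}^{-1})\oplus L^2(w\,d\m)$ that annihilate the diagonal, and deduce $\int_E f_2\,\conj{s}\,d\m=0$ for all $s\in K$. One small slip: by \eqref{Wformula} (and as the paper uses it later) one has $|W|=w$ on $E$, not $|W|^2=w$; this is harmless, since then $|s/w|=|pg|$ is bounded and your $\psi_s\in L^2(w\,d\m)$ all the same.

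The only substantive difference is the concluding step. The paper invokes the density of $K|_E$ in $L^2(1_E\,d\m)$ from \thref{constrKhru} (which rests on the Beurling--Wiener theorem). Your route, via $F=f_2\,\zeta g W\,1_E\in H^1$ and the Hardy-class uniqueness theorem on the positive-measure set $\T\setminus E$, is more elementary and in fact tighter: since $f_2$ is only guaranteed to lie in $L^2(w\,d\m)$ and not in $L^2(1_E\,d\m)$, density of $K$ in the latter does not literally give an orthogonality argument, whereas your step needs only $F\in L^1$, which follows from $f_2\sqrt{w}\in L^2$ together with the boundedness of $|g|\,\sqrt{w}$ on $E$.
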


\begin{proof}
The proof is very simple in principle. We will use the set $K$ in \eqref{setK} in combination with the sequence constructed in \thref{rapidweight}, and this will provide us with enough functionals on $X(\bm{\alpha}^{-1}) \oplus L^2(w d\m)$ to conclude that $f_1 \equiv 0$ implies $f_2 \equiv 0$, by a straight-forward duality argument involving the Beurling-Wiener theorem. 

For each $s \in K$, consider the functional \begin{equation}  \label{annihilatorFunctionals}p \mapsto -\int_\T p \conj{C_{s1_E}} \,d\m + \int_{E} p \conj{s} \, d\m \end{equation} which we define on the set $\{ (p,p) \in X(\bm{\alpha}^{-1}) \oplus L^2(w d\m) : p \text{ analytic polynomial }\}$. By construction, these functionals are the zero functionals. Apply \thref{rapidweight} to produce a rapidly increasing sequence $\bm{\alpha}$ such that $C_{s1_E} \in X(\bm{\alpha})$ for all $s \in K$. The constructed functionals are then continuous with respect to the metric $X(\bm{\alpha}^{-1}) \oplus L^2(w d\m)$. Indeed, we see from \eqref{setK} that $s = wq$ on $E$, where $q$ is a bounded function, and so \[ \int_E p\conj{s} \, d\m \leq C \|p \sqrt{w}\|_{L^2}, \] by Cauchy-Schwarz inequality. 

Now let $(f_1, f_2)$ lie $\mathcal{D}(\bm{\alpha}^{-1}, w)$ and assume that $f_1 \equiv 0$. Fix a sequence of polynomials $\{p_n\}_{n=1}^\infty$ such that $(p_n, p_n) \to (f_1, f_2) = (0, f_2)$ in the norm of $X(\bm{\alpha}^{-1}) \oplus L^2(w d\m)$. Then $(0, f_2)$ is annihilated by any functional in \eqref{annihilatorFunctionals}, and so \[ \int_E f_2\conj{s} d\m = 0, \quad s \in K. \] By the density statements in \thref{constrKhru}, we conclude that $f_2 \equiv 0$. 
\end{proof}

Let us take another look at the space $\mathcal{D}(\bm{\alpha}^{-1}, w)$ appearing above, assuming that it is satisfying the conclusion of \thref{XdiagProp}. If $(f, f_1)$ and $(f, f_2)$ are two tuples in $\mathcal{D}(\bm{\alpha}^{-1}, w)$ with coinciding first coordinate, then $(0, f_1 - f_2) \in \mathcal{D}(\bm{\alpha}^{-1}, w)$, and the above result implies that $f_1 \equiv f_2$. In particular, the projection $(f,f_1) \mapsto f$ onto the first coordinate is an injective mapping from such tuples to analytic functions on $\D$. But this means that $\mathcal{D}(\bm{\alpha}^{-1}, w)$ is in essence a space of analytic functions in which the analytic polynomials are dense. 

We make three more very simple but important observations.

\begin{prop} \thlabel{Dmult}
Let $\mathcal{D}(\bm{\alpha}^{-1}, w)$ be as in \thref{XdiagProp}, and identify it with a space of analytic functions on $\D$ as described above. 

\begin{enumerate}[(i)]
    \item $M_z: \mathcal{D}(\bm{\alpha}^{-1},w) \to \mathcal{D}(\bm{\alpha}^{-1},w)$ is completely non-isometric.
    \item If $f \in H^2$, then $f \in \mathcal{D}(\bm{\alpha}^{-1}, w)$ and the corresponding tuple equals $(f,f)$, where in the second coordinate $f$ is interpreted in the sense of boundary values of $f$ on $\T$.
    \item Every bounded analytic function $h$ defines a multiplication operator $M_h$ on $\mathcal{D}(\bm{\alpha}^{-1}, w)$, with norm at most $\|h\|_\infty$.
    
\end{enumerate}
\end{prop}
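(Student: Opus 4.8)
The plan is to prove the three items of \thref{Dmult} by directly exploiting the structure of $\mathcal{D}(\bm{\alpha}^{-1}, w)$ as the closure of the diagonal $\{(p,p)\}$ inside $X(\bm{\alpha}^{-1}) \oplus L^2(w\,d\m)$, together with the identification of tuples with analytic functions provided by \thref{XdiagProp}. Throughout, I will use that on the diagonal the two coordinate operations agree, so that operators defined coordinate-wise that preserve the diagonal will descend to the closure. The key background facts are that $\bm{\alpha}$ is rapidly increasing, hence $\bm{\alpha}^{-1}$ is rapidly decreasing, so by \thref{ToeplitzXalpha}(ii) every bounded analytic symbol $h$ gives a multiplication operator $M_h$ bounded on $X(\bm{\alpha}^{-1})$ with norm at most $\|h\|_\infty$; simultaneously $M_h$ is bounded on $L^2(w\,d\m)$ with the same bound, since $w$ is supported on $E \subset \T$ and $|h| \leq \|h\|_\infty$ on $\T$.

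For item $(iii)$, I would define $M_h$ on the dense diagonal by $M_h(p,p) := (hp, hp)$. Here $hp$ in the first coordinate is interpreted via the bounded operator $M_h$ on $X(\bm{\alpha}^{-1})$ applied to the polynomial $p$, and $hp$ in the second coordinate via multiplication in $L^2(w\,d\m)$. The point is that $(hp,hp)$ again lies on the diagonal when $h$ is itself a polynomial, and for general $h \in H^\infty$ one approximates $h$ by its Fej\'er polynomials $h_n$ in the bounded-pointwise sense and uses the uniform bound together with the functional-calculus convergence from \thref{ToeplitzXalpha} to pass to the limit. Since $M_h$ on the product space is bounded by $\|h\|_\infty$ on each summand, it is bounded by $\|h\|_\infty$ on the direct sum, and it maps the diagonal into $\mathcal{D}(\bm{\alpha}^{-1},w)$; hence it extends to a bounded operator on the closure with the stated norm bound. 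For item $(i)$, take $h(z) = z$; this shows $M_z$ is a bounded operator. To see it is completely non-isometric, suppose $\mathcal{M} \subset \mathcal{D}(\bm{\alpha}^{-1},w)$ is a closed invariant subspace on which $M_z$ acts as an isometry. Under the identification with analytic functions, an isometric shift forces $\|z^n f\| = \|f\|$ for elements $f$ of $\mathcal{M}$; examining the first coordinate, which sits continuously in $X(\bm{\alpha}^{-1}) \subset H^2$ (the radius-of-convergence computation in Section \ref{Xalphaduality} gives this containment for a rapidly decreasing sequence via its boundary behaviour), one sees that an isometry of $M_z$ is incompatible with the decaying weight $\bm{\alpha}^{-1}$ unless the first coordinate is trivial, whereupon \thref{XdiagProp} forces the second coordinate to vanish as well, so $\mathcal{M} = \{0\}$.

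For item $(ii)$, I would argue that for $f \in H^2$ the natural tuple $(f,f)$ — with the second $f$ read as $L^2$ boundary values — lies in $\mathcal{D}(\bm{\alpha}^{-1}, w)$. Approximating $f$ by the partial sums $p_n$ of its Taylor series, I must check that $(p_n,p_n) \to (f,f)$ in the product norm. In the second coordinate this is immediate: $p_n \to f$ in $H^2 = L^2(\T)$ and $w$ is bounded, so $p_n \to f$ in $L^2(w\,d\m)$. In the first coordinate, since $\bm{\alpha}^{-1}$ is decreasing and bounded (one may normalize $\alpha_0 = 1$ so $\alpha_k^{-1} \leq 1$), we have $\|g\|_{X(\bm{\alpha}^{-1})} \leq \|g\|_{H^2}$, so $H^2$ embeds continuously into $X(\bm{\alpha}^{-1})$ and $p_n \to f$ there too. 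Thus $(f,f) \in \mathcal{D}(\bm{\alpha}^{-1},w)$ and the identification of \thref{XdiagProp} sends it to $f$ itself.

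The main obstacle I anticipate is the complete non-isometry in item $(i)$: the other two items are essentially bookkeeping about which summand controls the norm, but $(i)$ requires showing that no nontrivial invariant subspace supports an isometric shift. The cleanest route is probably to transfer the question to the first coordinate and use that $X(\bm{\alpha}^{-1})$, for a rapidly decreasing sequence, admits no invariant subspace on which $M_z$ is isometric — because an isometric shift would preserve the $\ell^2(\bm{\alpha}^{-1})$ norm under the coefficient shift, forcing $\alpha_{k+1}^{-1} = \alpha_k^{-1}$ along the relevant coefficients, which contradicts the strict decay of $\bm{\alpha}^{-1}$ to zero. If the first coordinate of every element of the invariant subspace must then vanish, \thref{XdiagProp} finishes the argument; the subtlety will be handling the interaction between the two summands carefully enough to rule out an isometry that is "hidden" in the $L^2(w\,d\m)$ part, which is exactly what \thref{XdiagProp} is designed to prevent.
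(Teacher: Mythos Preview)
Your approach is essentially the paper's: for (ii) you approximate $f\in H^2$ by Taylor polynomials in both summands, for (iii) you use boundedness of $M_h$ on each factor (via \thref{ToeplitzXalpha}(ii) and the trivial $L^2(w\,d\m)$ bound) together with Fej\'er approximation, and for (i) you argue that an isometric restriction of $M_z$ forces the $X(\bm{\alpha}^{-1})$-coordinate to vanish, whereupon \thref{XdiagProp} gives the rest. The paper's proof of (iii) differs only cosmetically, passing through a weak-limit argument rather than extending from the diagonal.

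One slip to fix: you write ``$X(\bm{\alpha}^{-1}) \subset H^2$'', which is backwards. Since $\bm{\alpha}^{-1}$ is decreasing, the containment is $H^2 \subset X(\bm{\alpha}^{-1})$ (this is exactly what you use in part (ii)), and $X(\bm{\alpha}^{-1})$ is a strictly larger space of analytic functions on $\D$. Fortunately your argument for (i) does not actually use this containment; the point that matters is that $\alpha_k^{-1}\to 0$, so $\|z^n f_1\|^2_{X(\bm{\alpha}^{-1})}=\sum_k \alpha_{k+n}^{-1}|f_k|^2\to 0$ by dominated convergence, and hence $\|M_z^n f\|=\|f\|$ for all $n$ forces $f_1\equiv 0$. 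That is cleaner than your phrasing about ``forcing $\alpha_{k+1}^{-1}=\alpha_k^{-1}$'', which is not quite what isometry on a subspace gives you.
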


\begin{proof}
Part $(i)$ follows from the paragraph above. The only way a function $f \in \mathcal{D}(\bm{\alpha}^{-1},w)$ satisfies $\|M_z f\|_{\mathcal{D}(\bm{\alpha}^{-1},w)} = \|f\|_{\mathcal{D}(\bm{\alpha}^{-1},w)}$ is if $f$ vanishes on $\D$, which does not happen by \thref{XdiagProp}.

Part $(ii)$ follows in a similar way. We need to note only that since $\bm{\alpha^{-1}}$ is decreasing and $w$ is bounded, then for a suitable sequence $\{p_n\}_n$ of Taylor polynomials of $f \in H^2$ the tuples $(p_n, p_n)$ will converge in the norm of $X(\bm{\alpha}^{-1}) \oplus L^2(w \, d\m)$ to $(f,f)$. By part $(i)$, or the discussion in the paragraph above, there is only one tuple in $\mathcal{D}(\bm{\alpha}^{-1},w)$ which has $f$ as the first coordinate. So the tuple representing $f \in H^2 \cap \mathcal{D}(\bm{\alpha}^{-1},w)$ is precisely $(f,f)$.

To prove part $(iii)$, let $h$ be a bounded analytic function and $\{h_n\}_n$ its Fej\'er means. Let $\{p_n\}_n$ be a sequence of polynomials converging to $f \in \mathcal{D}(\bm{\alpha}^{-1}, w)$. Then \thref{ToeplitzXalpha} implies that $\{h_np_n\}_n$ is a norm-bounded sequence in $\mathcal{D}(\bm{\alpha}^{-1}, w)$. The weak limit of this sequence equals $hf \in \mathcal{D}(\bm{\alpha}^{-1}, w)$, and \[ \|hf\|_{\mathcal{D}(\bm{\alpha}^{-1}, w)} \leq \liminf_{n \to \infty} \|h_np_n\|_{\mathcal{D}(\bm{\alpha}^{-1}, w)} \leq \|h\|_\infty \|f\|_{\mathcal{D}(\bm{\alpha}^{-1}, w)}.\] The last inequality is again a consequence of \thref{ToeplitzXalpha}.

\end{proof}

\subsection{An uncertainty-type principle}

The singificance of \thref{XdiagProp} might not be easy to appreciate. In this section, which is independent of the rest of the article, we want to highlight how such a result can be applied in the theory of $\Po^2(\mu)$-spaces and how it relates to a classical result in the theory of Hardy spaces.

Recall that a square integrable function $f$ on $\T$ which lies in the closure of the analytic polynomials (that is, in the Hardy space $H^2$) cannot vanish on a set of positive measure. Thus the spectral smallness of $f$ (vanishing of negative Fourier coefficients of $f$) implies that the function cannot be too small. A beautiful exposition of this result, and other manifestations of the uncertainty principle in harmonic analysis, can be found in \cite{havinbook}. 
A combination of the deep work of Aleman, Richter and Sundberg in \cite{aleman2009nontangential} and our \thref{XdiagProp} will establish the following result of similar nature.

\begin{cor} \textbf{(An uncertainty-type principle for a class of $\Po^2(\mu)$-spaces)} \thlabel{uncertstrong} Let $C > -1$ and $E$ be a Beurling-Carleson set of positive measure. Let $w$ be a bounded positive measurable function which is supported on $E$ and satisfies $\int_E \log(w) d\m > -\infty$. Consider the measure $$d\mu = (1-|z|^2)^C dA + w d\m$$ and the classical Lebesgue space $L^2(\mu)$. Let $\Po^2(\mu)$ be the closure of analytic polynomials in $L^2(\mu)$. Then we have that $f \neq 0$ almost everywhere with respect to $\mu$, for any non-zero $f \in \Po^2(\mu)$.
\end{cor}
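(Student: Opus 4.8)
The plan is to realize $\Po^2(\mu)$ as a coordinate-respecting image inside the space $\mathcal{D}(\bm{\alpha}^{-1}, w)$ of \thref{XdiagProp}, transport to it the complete non-isometry obtained there, and then invoke the boundary value theory of Aleman, Richter and Sundberg to promote this to the asserted non-vanishing. First I would note that $L^2(\mu)$ splits orthogonally as $L^2\big((1-|z|^2)^C\,dA\big) \oplus L^2(w\,d\m)$, so that $\Po^2(\mu)$ is precisely the closure of the diagonal manifold $\{(p,p) : p \text{ analytic polynomial}\}$. The closure of the polynomials in the first summand is the standard weighted Bergman space $A^2_C$ (here $C > -1$ guarantees density of polynomials), and every $f \in \Po^2(\mu)$ is a pair $(\hat{f}, f_\T)$ with $\hat{f}$ analytic on $\D$ and $f_\T \in L^2(w\,d\m)$. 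Since $w$ is supported on $E$, the restriction of $\mu$ to $\T$ lives on $E$, and the conclusion ``$f \neq 0$ $\mu$-almost everywhere'' reduces to two assertions: that $\hat{f}$ does not vanish on a set of positive area (automatic once $\hat{f} \not\equiv 0$, its zeros being isolated) and that $f_\T \neq 0$ $\m$-almost everywhere on $E$.

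Next I would set up the comparison map. A direct Beta-integral computation gives $\|z^k\|_{A^2_C}^2 \asymp k^{-(C+1)}$, so $A^2_C = X(\bm{\beta})$ for a sequence $\bm{\beta}$ decaying only polynomially. Feeding $s_0 = \conj{\z g W}$ of \eqref{setK} into \thref{rapidweight}, exactly as in the proof of \thref{XdiagProp}, produces a rapidly increasing $\bm{\alpha}$; since $\alpha_k^{-1}$ then decays faster than every power of $k$, we have $\alpha_k^{-1} \leq M\beta_k$ for all $k$ and some constant $M$, whence the inclusion $A^2_C \hookrightarrow X(\bm{\alpha}^{-1})$ is continuous. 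Consequently, convergence of polynomials $p_n$ in $L^2(\mu)$ forces convergence of $(p_n, p_n)$ in $X(\bm{\alpha}^{-1}) \oplus L^2(w\,d\m)$, yielding a continuous linear map $\Phi : \Po^2(\mu) \to \mathcal{D}(\bm{\alpha}^{-1}, w)$ which sends $f = (\hat{f}, f_\T)$ to the tuple whose first coordinate is the analytic function $\hat{f}$ (now measured in the weaker $X(\bm{\alpha}^{-1})$-norm) and whose second coordinate is $f_\T$.

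I would then read off complete non-isometry. Because $|z| = 1$ on $\T$, one computes $\|M_z f\|_\mu^2 - \|f\|_\mu^2 = -\int_\D (1-|z|^2)\,|\hat{f}|^2\,(1-|z|^2)^C\,dA$, which vanishes precisely when $\hat{f} \equiv 0$. If $\hat{f} \equiv 0$, then $\Phi(f)$ has vanishing first coordinate, so \thref{XdiagProp} forces $f_\T \equiv 0$ and hence $f = 0$. Thus $M_z$ is completely non-isometric on $\Po^2(\mu)$, and, more to the point, every non-zero $f$ has $\hat{f} \not\equiv 0$. This already disposes of any isometric summand of the form $L^2(1_F\,d\m)$.

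It remains to rule out that a non-zero $f$ with $\hat{f} \not\equiv 0$ has $f_\T$ vanishing on a subset of $E$ of positive measure, and this is the step I expect to be the main obstacle, since it is where the deep work of \cite{aleman2009nontangential} is genuinely needed. Having established that $M_z$ is completely non-isometric, I would invoke their boundary value theory for irreducible $\Po^2(\mu)$-spaces: for a regular radial disk weight such as $(1-|z|^2)^C$ together with an absolutely continuous boundary weight whose logarithm is integrable (which holds here by $\int_E \log(w)\,d\m > -\infty$), every element has nontangential limits $\m$-almost everywhere on $E$ coinciding with its $\T$-component $f_\T$, and a non-trivial element cannot have these limits vanish on a set of positive measure. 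The delicate point is matching the precise hypotheses of the relevant theorem of \cite{aleman2009nontangential} to our $\mu$ and extracting from it this uniqueness statement; granting it, $f_\T \neq 0$ $\m$-almost everywhere on $E$, which together with the area-negligibility of the zeros of $\hat{f}$ gives $f \neq 0$ $\mu$-almost everywhere.
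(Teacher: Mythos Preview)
Your proposal is correct and follows essentially the same route as the paper: embed $\Po^2(\mu)$ into $\mathcal{D}(\bm{\alpha}^{-1},w)$ via the polynomial decay $\beta_k(C)\asymp k^{-(C+1)}$ of the Bergman moments, use \thref{XdiagProp} to rule out non-zero elements with vanishing disk-component, and then appeal to \cite{aleman2009nontangential}. Two small points where the paper is more precise: the hypothesis the paper feeds into \cite[Theorem~A]{aleman2009nontangential} is irreducibility (which you have just established), not the log-integrability of $w$ --- that condition was already spent in constructing the outer function $W$ and the set $K$; and the paper separates your final clause into two steps, obtaining from \cite{aleman2009nontangential} only that $\hat f$ has nontangential limits on $E$ equal to $f_\T$, and then invoking Privalov's classical uniqueness theorem to conclude that $f_\T$ cannot vanish on a set of positive measure unless $\hat f\equiv 0$.
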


\begin{proof}
A computation shows that for $f(z) = \sum_{k=0}^\infty f_kz^k$, we have $$\int_{\D} |f(z)|^2  (1-|z|^2)^C dA(z) = \sum_{k=0} \beta_k(C) |f_k|^2$$ where the weights $\beta_k(C)$ satisfy  the asymptotics $$\beta_k(C) \simeq \frac{1}{k^{C+1}}.$$ This means that convergence of polynomials $\{p_n\}_n$ in $\Po^2(\mu)$ implies convergence of $(p_n, p_n)$ in the space $X(\bm{\alpha}^{-1}) \oplus L^2(w d\m)$ appearing in \thref{XdiagProp}, and a direct consequence is that $\Po^2(\mu)$ contains no non-zero function which vanishes on $\D$. In particular, $\Po^2(\mu)$ does not contain the characteristic function of any subset of $\T$ of positive measure, and every element $f \in \Po^2(\mu)$ has a unique restriction $f|\D$ to $\D$, which of course is an analytic function. In particular, the space satisfies the assumptions of \cite[Theorem A]{aleman2009nontangential}, and the conclusion of that theorem is that for any $f \in \Po^2(\mu)$, its restriction $f|\D$ has a non-tangential limit almost everywhere with respect to $\mu|\T$, and this limit agrees almost everywhere with $f|\T$. If $f$ would vanish on a set of positive $\mu|\T$-measure, then a classical theorem of Privalov (see \cite{koosis}, for instance) can be used to deduce that $f \equiv 0$ throughout $\cD$. 
\end{proof}

In the above result we can obviously replace the part $d\mu|\T = w d\m$ with a more general weight $w$ which is carried by a countable union $\{E_n\}_n$ of Beurling-Carleson sets of positive measure, and where the weight $w$ is $\log$-integrable on each set $E_n$ separately. 

We want to remark also that the use of the very deep and general Aleman-Richter-Sundberg theorem from \cite{aleman2009nontangential} in the above proof can likely be avoided, and the existence of non-tangential limits on $E$ for functions $f$ in $\Po^2(\mu)$ of the described form, or even in the space $\mathcal{D}(\bm{\alpha}^{-1},w)$, is likely accessible in a more straightforward way (see the introductory section of the article \cite{aleman2009nontangential} for an exposition of previously attained special cases of the Aleman-Richter-Sundberg theorem). 

\section{A permanence principle for inner factors}

\label{permanencesec}

Let $\mathcal{H}$ be a space of analytic $\D$ which includes at least $H^\infty$, the algebra of bounded analytic functions. A situation which appears in context of the duality approach to certain approximation problems in function theory (see \cite{limani2021abstract} and \cite{DBRpapperAdem}), and which is certainly also of independent interest, is the following. Assume that $\hil$ carries a norm (or at least some other type of topological structure) and we have a convergent sequence of the form $$\lim_{n\to \infty} \|\theta f_n-f\|_{\mathcal{H}} = 0,$$ where $\theta$ is an inner function, and all other appearing functions are bounded and analytic in $\D$. Then, in particular, $f$ admits an inner-outer factorization $f = IU$ into an inner function $I$ and an outer function $U$. We ask: is $I$ divisible by $\theta$? In other words, does the inner factor $\theta$ get passed onto the limit $f \in H^\infty$ in the metric induced by the norm $\| \cdot \|_\hil$? We will call this property \textit{permanence of an inner function $\theta$} in the corresponding metric.

\begin{definition} \thlabel{permprinc} Let $\hil$ be a topological space of analytic functions which contains $H^\infty$, and $\theta$ be a given inner function. We say that the pair $(\mathcal{H}, \theta)$ satisfies the \textit{permanence property} if $$\lim_{n\to \infty} \theta f_n = f,$$ in the sense of the topology of $\hil$, implies that $f/\theta$ is bounded, whenever $f_n,f$ are bounded analytic functions. \end{definition}

Every inner function $I$ is the form $I = BS_\nu$, where $B$ is a Blaschke product and $S_\nu$ is a singular inner function. The above problem is of course most interesting for singular inner functions, since the Blaschke product $B$ will be passed onto the limit under any reasonable norm defined on analytic functions. In the context of the usual $L^2$-norm computed on the circle, it is of course well-known that any inner function $\theta$ satisfies the permanence property, but for many other metrics a more interesting situation occurs. Here a principal set of examples consists of the weighted $L^2$ metrics on the unit disk $\D$. Recall that a singular inner function has the form
\begin{equation} \label{innerformula} S_\nu(z) = \exp\Big(-\int_\T \frac{\zeta + z}{\zeta - z} d\nu(\z)\Big), \quad z \in \D, 
\end{equation} where $\nu$ is a finite positive singular Borel measure on $\T$. In Section \ref{introsec}, a decomposition of the measure $\nu$ was introduced in \eqref{nu-decomp}. The part $S_{\nu_\Ca}$ is passed onto the limit under convergence of bounded functions in the weighted Bergman spaces norms with polynomially decreasing weights. That is, if $\nu = \nu_\Ca$ in \eqref{nu-decomp}, then for $\theta = S_\nu$ we have that \begin{gather*}
    \lim_{n \to \infty} \int_{\D} |\theta f_n - f|^2 (1-|z|^2)^C dA(z) = 0 \\ \Rightarrow f/\theta \in H^\infty
\end{gather*} whenever $f_n, f$ are all bounded analytic functions, and $C$ is any positive number. In contrast, $S_{\nu_{\K}}$ can vanish under the same circumstances. A proof for the first claim appears in \cite{smoothdensektheta}, while the second is a consequence of a deep cyclicity theorem for inner functions which was independently established by Roberts in \cite{roberts1985cyclic} and Korenblum in \cite{korenblum1975extension}.  

Let us go back to the setting of \thref{XdiagProp} and \thref{Dmult} where the Hilbert space $\mathcal{D}(\bm{\alpha}^{-1}, w)$ appears. We noted that if $\bm{\alpha}$ is suitably chosen, then $\mathcal{D}(\bm{\alpha}^{-1}, w)$ is in fact a space of analytic functions, and it contains $H^2$. Thus the above question of inner factor permanence makes sense in the context of the norm on $\mathcal{D}(\bm{\alpha}^{-1}, w)$. 

Weaker versions of the following results appear in \cite{ptmuinnner}, where circumstances allow for statements in much less technical form. This is a consequence of the fact that the sequences $\bm{\alpha}$ which appear in \cite{ptmuinnner} increase only polynomially. Below, we show that by fixing some singular inner function $\theta$ of some particular structure, we can alter the methods in \cite{ptmuinnner} and construct a space $\mathcal{D}(\bm{\alpha}^{-1}, w)$ where the sequence $\bm{\alpha}$ is rapidly increasing weight and in which the permanence principle in \thref{permprinc} holds for that given $\theta$. The corresponding results for $\Po^2(\mu)$-spaces from \cite{ptmuinnner} are corollaries (we state them in \thref{ptmupermprinc} below), but we will need the full strength of the results established below in the principal application to come. 

\begin{lemma} \thlabel{innerpermanenceCarleson} 
Let $\theta = S_\nu$ be a fixed singular inner function for which in the decomposition \eqref{nu-decomp} of $\nu$, the part $\nu_{\Ca}$ is supported on a single Beurling-Carleson set $F$ of Lebesgue measure zero, and $\nu_\K \equiv 0$. Then there exists a rapidly increasing sequence $\bm{\alpha} = \{\alpha_k\}_{k=0}^\infty$ (which depends on $\theta$) such that the pair $(X(\bm{\alpha}^{-1}), \theta)$ satisfies the permanence property in \thref{permprinc}. 
\end{lemma}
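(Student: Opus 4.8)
The plan is to establish the permanence property for the pair $(X(\bm{\alpha}^{-1}), \theta)$ by producing, for the given $\theta = S_\nu$ with $\nu = \nu_\Ca$ supported on a single Beurling-Carleson set $F$ of measure zero, a rapidly increasing weight $\bm{\alpha}$ together with a rich supply of continuous linear functionals on $X(\bm{\alpha}^{-1})$ that annihilate the manifold $\theta H^\infty$ but detect any failure of divisibility by $\theta$ in the limit. The mechanism is dual to the one used in the proof of \thref{XdiagProp}: there, smooth Cauchy transforms produced functionals that forced a vanishing second coordinate; here, they should produce functionals whose joint kernel captures exactly the multiples of $\theta$.

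\textbf{Construction of the weight.} First I would invoke \thref{mainlemma}, applied to the Beurling-Carleson set $F$ of measure zero, to obtain an analytic cut-off function $g: \D \to \D$ whose boundary function $G(t) = g(e^{it})$ is smooth on all of $\T$ (the measure-zero case of the lemma gives genuine smoothness, not merely decay toward $F$) and vanishes to infinite order at $F$. The key point is that because $\nu$ is carried by $F$, the singular inner function $\theta = S_\nu$ is analytic and unimodular across $\T \setminus F$, and its derivatives in $t$ blow up only polynomially in $\dist{e^{it}}{F}^{-1}$ near $F$, exactly as in the estimate \eqref{Westimate} for the outer factor $W$. Consequently the product $\conj{\zeta}\, \conj{g}\, \theta$, or more precisely functions of the form $s = \conj{\zeta p g}\,\theta$ for analytic polynomials $p$, have boundary data that are smooth on $\T$ with the superpolynomial decay near $F$ needed to kill the poles coming from $\theta$. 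The Cauchy transforms $C_s$ of such functions are then in $\A^\infty$ by the same Fourier-coefficient decay argument as in the proof of \thref{constrKhru}. I would then apply \thref{rapidweight} to the generating function $s_0 = \conj{\zeta g}\,\theta$ to extract a single rapidly increasing sequence $\bm{\alpha}$ with $\lim_k \alpha_k^{1/k} = 1$ such that $C_s \in X(\bm{\alpha})$ for every $s$ in the family $\{\conj{p}\,s_0\}$.

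\textbf{The duality argument.} With this $\bm{\alpha}$ fixed, suppose $\theta f_n \to f$ in $X(\bm{\alpha}^{-1})$ with all functions bounded. Because $X(\bm{\alpha})$ is the dual of $X(\bm{\alpha}^{-1})$ under the pairing \eqref{Xalphapairing}, and because this pairing reduces to the $L^2(\T)$ pairing on $H^2$ functions, I would test the convergence against the functionals given by $g \in X(\bm{\alpha})$ of the form $C_s$. The crucial computation is that $\ip{\theta f_n}{C_s}$ can be re-expressed, via \eqref{flip} and the identity \eqref{pLCs}, as an integral of $\theta f_n$ against $\conj{s}$ over $\T$; since $s$ carries the factor $\theta$ (namely $s = \conj{\zeta p g}\,\theta$), the quantity $\conj{\theta}\cdot \theta f_n = f_n$ appears and the pole/zero structure of $\theta$ cancels, leaving a functional that in the limit reads off whether $\conj{\theta} f \in \conj{H^2}$, i.e. whether $\theta$ divides the inner part of $f$. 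I would conclude by a Beurling-type / Beurling-Wiener argument exactly as in the final paragraph of the proof of \thref{constrKhru}: the density of the transforms $C_s$ in the relevant backward-shift invariant subspace forces $\conj{\theta} f 1_{\T\setminus F}$ (or the appropriate boundary object) to behave like a conjugate-analytic function, which is impossible unless $f/\theta$ is itself bounded analytic.

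\textbf{The main obstacle.} The hardest part will be arranging the cancellation of $\theta$ cleanly at the level of boundary values and verifying that the resulting functionals genuinely annihilate $\theta H^\infty$ while separating non-multiples of $\theta$ --- in other words, that the smooth Cauchy transforms $C_s$ built from $g$ and $\theta$ span a large enough subspace of $X(\bm{\alpha})$ to detect the inner factor. This requires that the smoothness and superpolynomial decay of $G$ at $F$ genuinely dominate the polynomial singularities of $\theta$ and all its $t$-derivatives near $F$, so that $s = \conj{\zeta p g}\,\theta$ remains in $C^\infty(\T)$; this is precisely where the measure-zero case of \thref{mainlemma} and the structural hypothesis that $\nu = \nu_\Ca$ is supported on the single Beurling-Carleson set $F$ are essential, and where the restriction $\nu_\K \equiv 0$ cannot be relaxed, since for the $\nu_\K$ part no such smooth cut-off can exist.
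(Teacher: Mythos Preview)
Your proposal is essentially correct and follows the same route as the paper: build the family $K_1 = \{\theta\,\conj{\zeta p g_F} : p \text{ polynomial}\}$ using the cut-off $g_F$ from \thref{mainlemma}, observe that the estimate \eqref{thetaestimate} on $\theta$ together with the decay of $g_F$ makes every $s \in K_1$ a function in $C^\infty(\T)$ so that $C_s \in \A^\infty$, and then invoke \thref{rapidweight} to pack all the $C_s$ into a single $X(\bm{\alpha})$ with $\bm{\alpha}$ rapidly increasing.

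Your duality step is also right in spirit but is phrased less cleanly than necessary, and imports some machinery from the positive-measure situation that does not belong here. The paper's version is simply this: one checks directly that each $C_s$ lies in the model space $K_\theta$ (via $\ip{\theta h}{C_s}_{L^2} = \ip{\theta h}{s}_{L^2} = \int_\T h\zeta p g_F\,d\m = 0$), and that the set $\{C_s : s \in K_1\}$ is dense in $K_\theta$ (if $f \in K_\theta$ is orthogonal to all of them, write $f\conj{\theta} = \conj{\zeta f_0}$ with $f_0 \in H^2$ and use that $\{pg_F\}$ is dense in $H^2$ because $g_F$ is outer). Then $\theta f_n \to f$ in $X(\bm{\alpha}^{-1})$ gives $\ip{f}{C_s} = \lim_n \ip{\theta f_n}{C_s} = 0$ for all $s$, whence $f \perp K_\theta$, i.e.\ $f \in \theta H^2$. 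There is no need for a Beurling--Wiener argument, and the factor $1_{\T\setminus F}$ you mention is almost-everywhere equal to $1$ since $F$ has Lebesgue measure zero, so it plays no role. The ``relevant backward-shift invariant subspace'' you allude to is exactly $K_\theta$, and once you name it the conclusion is immediate.
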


\begin{proof}
The idea of the proof is as follows. Let $u$ be a function in $K_\theta$, the orthogonal complement of $\theta H^2$ in $H^2$, and $\Lambda_u$ be the (in general unbounded with respect to the norm on $X(\bm{\alpha^{-1}})$) linear functional \begin{equation}
    \label{lambdafunc} \Lambda_u f : = \int_{\T} f\conj{u} \, d\m
\end{equation} which is defined for $f \in H^2 \subset X(\bm{\alpha^{-1}})$. If $\Lambda_u$ can be extended to a bounded linear functional on $X(\bm{\alpha^{-1}})$ for $u$ in a dense subset of $K_\theta$, then $\|\theta f_n - f\|_{X(\bm{\alpha}^{-1})} \to 0$, with $f_n, f \in H^2$, will imply that $f$ is orthogonal to $K_\theta$ in $H^2$. Indeed, in such a case we will have $$\ip{f}{u} = \lim_{n \to \infty} \ip{\theta f_n}{u} = 0,$$ for all $u$ in a dense subset of $K_\theta$, and so $f \in (K_\theta)^\perp = \theta H^2$. This of course means that $f/\theta \in H^2$. We will show that such a dense set can be constructed under the stated assumption, for some rapidly increasing sequence $\bm{\alpha}$. The proof will involve construction of a new set of smooth Cauchy transforms similar to those in \eqref{setCK}, and an application of \thref{rapidweight}. 

Note that $\theta$ extends analytically across the set $\T \setminus F$, and a simple differentiation argument and the formula \eqref{innerformula} shows that we have the following estimate: \begin{equation}
    \label{thetaestimate} \Big|\frac{\partial^m }{\partial t^m}\theta(e^{it}) \Big| \leq C_m \cdot \dist{e^{it}}{F}^{-2m}, \quad e^{it} \in \T \setminus F.
\end{equation} Let $g = g_F$ be the function decaying rapidly near $F$ which is given by \thref{mainlemma}. We conclude, similarly to as in the proof of \thref{constrKhru}, that the set of functions on $\T$ defined by \[K_1 := \{ \theta \conj{\zeta p g_F} : p \text{ analytic polynomial } \} \] consists of functions in $C^\infty(\T)$, and thus the Cauchy transform of any function in this set is in $\A^\infty$. Let $P_+$ denote the projection operator from $L^2(\T)$ to the Hardy space $H^2$. Then $P_+f = C_f$, interpreted as functions on the circle. We now verify that these Cauchy transforms are  members of $K_{\theta}$. Let $\ip{\cdot}{\cdot}_{L^2}$ denote the usual inner product for $L^2(\T)$. For $s = \theta \conj{\zeta p g_F}$ and any $h \in H^2$, we have \[\ip{\theta h}{C_s}_{L^2} = \ip{\theta h}{P_+ s}_{L^2} = \ip{\theta h}{s}_{L^2} = \int_\T h \zeta p g_F d\m = 0,\] where the last integral vanishes because the integrand represents the boundary function of an analytic function with a zero at the origin. Thus $C_{s} \in K_{\theta}$ for any $s \in K_{\theta}$. We now verify that this set of Cauchy transforms is dense in $K_{\theta}$. If $f \in K_{\theta}$, then $f\conj{\theta} = \conj{\zeta f_0}$ as boundary functions, where $f_0 \in H^2$. Orthogonality of $f \in K_{\theta}$ to all functions $C_s, s\in K_1$, means that \[ \ip{f }{C_s}_{L^2} = \int_\T \conj{f_0} p g_F d\m = 0.\] Since $g_F$ is outer, the set \begin{equation} \label{pgFset}
    \{ pg_F : p \text{ analytic polynomial } \}
\end{equation} is dense in $H^2$, and so the above implies $f_0 \equiv 0$, which means that $f \equiv 0$. We have thus constructed a dense set of functions in $K_{\theta}$ to which \thref{rapidweight} applies, and the conclusion is that the Cauchy transforms we constructed are all contained in some space $X(\bm{\alpha})$ defined by a rapidly increasing sequence $\bm{\alpha}$. Then the space $X(\bm{\alpha^{-1}})$ satisfies the permanence principle for $\theta$, by the observation in the first paragraph of this proof. \end{proof}

\begin{lemma} \thlabel{innerpermanenceKorenblum} 
Let $E$ be a Beurling-Carleson set of positive Lebesgue measure, and let $w$ be a weight supported on $E$ and satisfying $\int_E \log (w) \, d\m > -\infty$. Let $\theta = S_\nu$ be a fixed singular inner function for which $\nu$ is supported on the set $E$. There exists a rapidly increasing sequence $\bm{\alpha} = \{\alpha_k\}_{k=0}^\infty$ (which depends on $E, w$ and $\theta$) for which the conclusion of \thref{XdiagProp} holds, and moreover the pair $(\mathcal{D}(\bm{\alpha}^{-1}, w) , \theta)$ satisfies the permanence property in \thref{permprinc}. 
\end{lemma}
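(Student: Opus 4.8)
The plan is to combine the construction behind \thref{XdiagProp}, which turns $\mathcal{D}(\bm{\alpha}^{-1}, w)$ into a genuine space of analytic functions, with an adaptation of the argument of \thref{innerpermanenceCarleson} to the present situation, where the mass of $\nu$ now sits on a set $E$ of \emph{positive} measure. The crucial observation is that convergence $\theta f_n \to f$ in the norm of $\mathcal{D}(\bm{\alpha}^{-1}, w)$ forces, in particular, convergence in the first coordinate, i.e. $\|\theta f_n - f\|_{X(\bm{\alpha}^{-1})} \to 0$; here we invoke \thref{Dmult}(ii) to identify the bounded analytic functions $\theta f_n, f$ with the diagonal tuples $(\theta f_n, \theta f_n)$ and $(f,f)$, and to note $H^2 \subset X(\bm{\alpha}^{-1})$. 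Thus permanence for the pair $(\mathcal{D}(\bm{\alpha}^{-1}, w), \theta)$ will follow once we establish the orthogonality mechanism of \thref{innerpermanenceCarleson} inside $X(\bm{\alpha}^{-1})$, provided the very same $\bm{\alpha}$ also realizes the conclusion of \thref{XdiagProp}.

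First I would adapt the smooth Cauchy transform construction of \thref{innerpermanenceCarleson} to the positive-measure carrier $E$. Since $\nu$ is supported on $E$, the function $\theta = S_\nu$ extends analytically across $\T \setminus E$ and obeys the derivative bound $|\partial_t^m \theta(e^{it})| \leq C_m \dist{e^{it}}{E}^{-2m}$ on $\T \setminus E$, exactly as in \eqref{thetaestimate}. Letting $g = g_E$ be the cut-off function of \thref{mainlemma} attached to $E$, the rapid decay \eqref{Gest} shows that every member of the set $K_1 := \{ \theta \conj{\zeta p g_E} : p \text{ analytic polynomial}\}$ is smooth on $\T$ with all derivatives vanishing on $E$, so its Cauchy transform lies in $\A^\infty$. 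The same computations as in \thref{innerpermanenceCarleson} then show that each such Cauchy transform lies in $K_\theta$, and that, because $g_E$ is outer and $\{ p g_E \}$ is therefore dense in $H^2$, these Cauchy transforms are dense in $K_\theta$. The positive measure of $E$ plays no adverse role: smoothness comes solely from the decay of $g_E$ near $E$ together with the confinement of the singularities of $\theta$ to $E$, and density only needs $g_E$ outer.

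Next I would produce the single weight $\bm{\alpha}$. Applying \thref{rapidweight} to the generator $s_0 = \conj{\zeta g_E W}$ of the set $K$ from \eqref{setK} yields a rapidly increasing sequence $\bm{\alpha}^{(1)}$ with $C_{s1_E} \in X(\bm{\alpha}^{(1)})$ for all $s \in K$; applying it to the generator $s_0' = \theta \conj{\zeta g_E}$ of $K_1$ yields a rapidly increasing sequence $\bm{\alpha}^{(2)}$ containing all the model-space Cauchy transforms above. Setting $\alpha_k := \min(\alpha^{(1)}_k, \alpha^{(2)}_k)$ produces a rapidly increasing sequence with $\lim_k \alpha_k^{1/k} = 1$, such that $X(\bm{\alpha})$ contains both $X(\bm{\alpha}^{(1)})$ and $X(\bm{\alpha}^{(2)})$; hence both families of Cauchy transforms belong to $X(\bm{\alpha})$. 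With this choice the proof of \thref{XdiagProp} goes through unchanged, since the annihilating functionals \eqref{annihilatorFunctionals} remain continuous on $X(\bm{\alpha}^{-1}) \oplus L^2(w\,d\m)$ because $C_{s1_E} \in X(\bm{\alpha})$; this gives the conclusion of \thref{XdiagProp}. For permanence, given $\theta f_n \to f$ in $\mathcal{D}(\bm{\alpha}^{-1}, w)$ with $f_n, f$ bounded analytic, the first-coordinate convergence together with $u := C_{s'} \in X(\bm{\alpha}) = (X(\bm{\alpha}^{-1}))^*$ gives, for each $s' \in K_1$,
\[ \ip{f}{u} = \lim_{n \to \infty} \ip{\theta f_n}{u} = \lim_{n \to \infty} \int_\T \theta f_n \conj{u}\, d\m = 0, \]
the last equality because $u \in K_\theta$ while $\theta f_n \in \theta H^2 = (K_\theta)^\perp$. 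By density of $\{C_{s'}\}$ in $K_\theta$ we conclude $f \perp K_\theta$, hence $f \in \theta H^2$ and $f/\theta \in H^2$; since $f$ is bounded, $f/\theta$ is bounded, which is the permanence property of \thref{permprinc}.

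The step I expect to require the most care is arranging that one sequence $\bm{\alpha}$ simultaneously serves the irreducibility conclusion of \thref{XdiagProp} and the permanence conclusion. This is handled by the elementary observation that enlarging $X(\bm{\alpha})$ — shrinking $\bm{\alpha}$ to the pointwise minimum — only helps both arguments, since each merely needs certain explicit Cauchy transforms to lie in $X(\bm{\alpha}) = (X(\bm{\alpha}^{-1}))^*$. A secondary point deserving attention is the verification that the model-space density argument of \thref{innerpermanenceCarleson} survives the passage from a measure-zero carrier to the positive-measure carrier $E$; the key is that the single Beurling-Carleson set $E$ carries both the weight $w$ (through $W$) and the singular measure $\nu$, so one cut-off $g_E$ tames the boundary singularities of both $W$ and $\theta$ at once.
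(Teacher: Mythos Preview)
There is a genuine gap. Your key claim --- that every $s' \in K_1 = \{\theta\,\overline{\zeta p g_E}\}$ is a $C^\infty(\T)$ function with all derivatives vanishing on $E$ --- is false when $|E|>0$. The estimate \eqref{Gest} controls $g_E(e^{it})$ only as $e^{it}$ approaches $E$ \emph{from within} $\T\setminus E$; it says nothing about the boundary values of $g_E$ on $E$ itself. Since $g_E$ is a nonzero function in $H^\infty$, its boundary values are nonzero almost everywhere on $\T$, and in particular $g_E$ does \emph{not} vanish on $E$. (This is exactly why \thref{mainlemma} singles out the measure-zero case in its final sentence.) Consequently $s'|_E = \theta\,\overline{\zeta p g_E}\,|_E$ is a generically non-smooth bounded measurable function --- the singular inner function $\theta$ has no smoothness on the support of $\nu$, and nothing in $s'$ cancels this. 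So $s' \notin C^\infty(\T)$, its Cauchy transform $C_{s'}$ need not lie in $\A^\infty$, and \thref{rapidweight} cannot be invoked to produce your $\bm{\alpha}^{(2)}$. The entire permanence argument, which rests on having $C_{s'} \in X(\bm{\alpha})$, therefore collapses.

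The paper does not attempt to make the full Cauchy transforms smooth. Instead it introduces the outer factor $W$ and works with $K_2 = \{\theta\,\overline{\zeta p g_E W}\}$, then splits each $v = C_s$ as $v_1 + v_2 = C_{s 1_{\T\setminus E}} + C_{s 1_E}$. The off-$E$ piece $s\,1_{\T\setminus E}$ is genuinely in $C^\infty(\T)$ (here the decay of $g_E$ and the estimates \eqref{Westimate}, \eqref{thetaestimate} suffice, since one only needs smoothness up to $\partial E$ from outside), so $v_1 \in \A^\infty$ and \thref{rapidweight} applies to it. The on-$E$ piece is handled differently: because $|W| = w$ on $E$, the functional $f \mapsto \int_E f\,\overline{s}\,d\m$ is bounded in the $L^2(w\,d\m)$ metric by Cauchy--Schwarz. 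Thus $\Lambda_v$ extends to a bounded functional on the \emph{full} product $X(\bm{\alpha}^{-1}) \oplus L^2(w\,d\m)$, not just on the first coordinate, and permanence follows on $\mathcal{D}(\bm{\alpha}^{-1}, w)$. Your reduction to first-coordinate convergence alone cannot work here: the weight $w$ is essential to control the on-$E$ contribution, and without the factor $W$ there is no such control.
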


The difference from \thref{innerpermanenceCarleson} is that \thref{innerpermanenceKorenblum} also applies to the case when $\nu_\K$ in \eqref{nu-decomp} is non-zero.

\begin{proof}
We follow the same idea as in the proof of \thref{innerpermanenceCarleson}. From the weight $w$ we construct the outer function $W$ given by the formula \eqref{Wformula}. For the set $E$ we construct the corresponding function $g = g_E$ as in \thref{mainlemma}, and we define 
\[K_2 := \{ \theta \conj{\zeta p g_E W} : p \text{ analytic polynomial } \}. \] This time, the Cauchy transforms of the functions in $K_2$ are not necessarily smooth. However, they are again contained and dense in $K_\theta$, as in \thref{innerpermanenceCarleson}. The only difference in the proof, which we skip, is that the set in \eqref{pgFset} is replaced by \[ \{ pg_E W : p \text{ analytic polynomial } \},\] which is dense in $H^2$ by the fact that $W$ and $g_E$ are outer. 

For $$s_0 := \theta \conj{\zeta g_E W} \in K_2$$ we define the Cauchy transform $u_0 = C_{s_0}$. We can decompose $u_0$ according to  \begin{gather}
    u_0(z) = \int_{\T} \frac{s_0(\zeta)}{1-\conj{\zeta}z} \, dm(\zeta) = \int_{\T \setminus E} \frac{s_0(\zeta)}{1-\conj{\zeta}z} \, dm(\zeta) + \int_{E} \frac{s_0(\zeta)}{1-\conj{\zeta}z} \, dm(\zeta) \nonumber \\ := u_1(z) + u_2(z)  \label{udecomp}
\end{gather}
Estimates of the form \eqref{Westimate} and \eqref{thetaestimate} show that in $s_01_{\T \setminus E}$ is a function in $C^\infty$, and thus $u_1 \in \A^\infty$. Consequently, by \thref{rapidweight}, there exists a rapidly increasing sequence $\bm{\beta}$ such that $C_{s1_{\T \setminus E}}$ is in $X(\bm{\beta^{-1}})$, for all $s \in K_2$. Apply now \thref{rapidweight} to $E$ and $w$ to obtain another rapidly increasing sequence $\bm{\gamma}$ such that the conclusion of that proposition holds, and let $\bm{\alpha}$ be the termwise minimum of $\bm{\beta}$ and $\bm{\gamma}$: \[ \alpha_k = \min \{ \beta_k, \gamma_k \}, \quad k \geq 0. \] Then $\bm{\alpha}$ is again a rapidly increasing sequence, conclusion of \thref{rapidweight} holds, and $C_{s1_{\T \setminus E}}$ is in $X(\bm{\alpha})$, for all $s \in K_2$.

Moreover, the linear functional \[ f \mapsto \int_\T f\conj{u_2} \, d\m,  \] defined on analytic polynomials $f$, is bounded in the metric of $L^2(w \,d\m)$. Indeed, recall that $|W| = w$ on the set $E$, and so we have \begin{gather*}
    \int_\T f\conj{u_2} \, d\m = \ip{f}{P_+ s_01_E}_{L^2} = \int_E f \conj{s_0} \, d\m = \int_E f \conj{\theta}\zeta g_E W \, d\m \leq C \|f \sqrt{w}\|_{L^2}
\end{gather*} since $\conj{\theta} \zeta g_E$ is bounded. The same argument shows also that $C_{s1_E}$ defines a bounded linear functional on the analytic polynomials in the metric of $L^2(w \, d\m)$, for all $s \in K_2$.  

We let $v = C_s$ for $s \in K_2$, $v_1 = C_{s1_{\T \setminus E}}, v_2 = C_{s1_{E}}$, so that $v = v_1 + v_2$, and go back to the definition of the functional $\Lambda_u$ in \eqref{lambdafunc}. We have just verified that we can decompose it according to \eqref{udecomp}
\begin{equation}
   \Lambda_v f : = \int_{\T} f\conj{v} \, d\m = \int_{\T} f \conj{v_1} \, d\m + \int_{\T} f \conj{v_2} \, d\m.
\end{equation} in such a way that the first piece defines a continuous linear functional on the analytic polynomials in the metric of $X(\bm{\alpha^{-1}})$, and the second piece defines a continuous linear functional on the analytic polynomials in the metric of $L^2(w \, d\m)$. But then these functionals extend continuously to $\mathcal{D}(\bm{\alpha^{-1}}, w)$, and by the density of $\{ C_s : s \in K_2 \}$ in $K_\theta$ and the argument in the first paragraph of the proof of \thref{innerpermanenceCarleson}, we conclude that the pair $(\mathcal{D}(\bm{\alpha}^{-1}, w) , \theta)$ satisfies the permanence property.
\end{proof}

\begin{cor} \thlabel{finalcorollary} \textbf{(An inner factor permanence principle)}
Let $E$ be a Beurling-Carleson set of positive Lebesgue measure, and let $w$ be a weight supported on $E$ and satisfying $\int_E \log (w) \, d\m > -\infty$. Let $\theta = S_\nu$ be a fixed singular inner function such in the decomposition \eqref{nu-decomp}, the part $\nu_{\Ca}$ satisfies $\nu_{\Ca}(F) = \nu_{\Ca}(\T)$ for a single fixed Beurling-Carleson set $F$ of Lebesgue measure zero, and $\nu_\K$ is supported on $E$. There exists a rapidly increasing sequence $\bm{\alpha} = \{\alpha_k\}_{k=0}^\infty$ (which depends on $E, w$ and $\theta$) for which the conclusion of \thref{XdiagProp} holds, and moreover the pair $(\mathcal{D}(\bm{\alpha}^{-1}, w) , \theta)$ satisfies the permanence property in \thref{permprinc}. 
\end{cor}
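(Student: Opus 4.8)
The plan is to merge the two preceding lemmas by repeating the functional-analytic scheme of \thref{innerpermanenceKorenblum}, but with a cut-off and a test set adapted simultaneously to both singular parts of $\nu$. Write $\theta = \theta_\Ca\theta_\K$, where $\theta_\Ca = S_{\nu_\Ca}$ is singular on the measure-zero set $F$ and $\theta_\K = S_{\nu_\K}$ is singular on the positive-measure set $E$. Let $g_F$ and $g_E$ be the cut-off functions furnished by \thref{mainlemma} for $F$ and $E$ respectively, let $W$ be the outer function \eqref{Wformula} attached to $w$, and set $g = g_Fg_E$, which is outer and, by \eqref{Gest}, decays faster than any power of the distance to $E\cup F$. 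Consider the test set
\[ K_3 := \{ \theta\conj{\zeta p g W} : p \text{ analytic polynomial} \}. \]
As in \thref{innerpermanenceCarleson} and \thref{innerpermanenceKorenblum}, the Cauchy transforms $\{C_s : s\in K_3\}$ lie in $K_\theta$ (the computation $\ip{\theta h}{C_s}_{L^2} = \int_\T h\zeta pgW\,d\m = 0$ is unchanged), and they are dense in $K_\theta$ because $gW$ is outer, so that $\{pgW\}$ is dense in $H^2$ and orthogonality of some $f\in K_\theta$ to all $C_s$ forces the associated $f_0\in H^2$ to vanish.

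Next I would run the decomposition \eqref{udecomp}. Fixing $s\in K_3$, split $C_s = C_{s1_{\T\setminus E}} + C_{s1_E}$. The key point is that $s1_{\T\setminus E}$ is smooth: on $\T\setminus E$ the only sources of blow-up are the derivatives of $\theta_\K$ and of $W$ near $E$, controlled by \eqref{thetaestimate} and \eqref{Westimate}, and the derivatives of $\theta_\Ca$ near $F$, controlled by the analogue of \eqref{thetaestimate} for $\theta_\Ca$; the factor $g = g_Fg_E$ suppresses all of these through \eqref{Gest}, so $s1_{\T\setminus E}\in C^\infty(\T)$ and $C_{s1_{\T\setminus E}}\in\A^\infty$. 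Writing $s1_{\T\setminus E} = \conj{p}\,(s_01_{\T\setminus E})$ with $s_0$ the element of $K_3$ corresponding to $p=1$, \thref{rapidweight} supplies a rapidly increasing sequence $\bm{\beta}$ with $C_{s1_{\T\setminus E}}\in X(\bm{\beta})$ for every $s\in K_3$, so the functional $f\mapsto\int_\T f\conj{C_{s1_{\T\setminus E}}}\,d\m$ is continuous in the $X(\bm{\beta}^{-1})$-norm. The remaining piece is handled exactly as in \thref{innerpermanenceKorenblum}: since $|W| = \sqrt{w}$ on $E$ and $\conj{\theta}\zeta g$ is bounded, $\int_E f\conj{s}\,d\m \leq C\|f\sqrt{w}\|_{L^2}$, so $f\mapsto\int_\T f\conj{C_{s1_E}}\,d\m = \int_E f\conj{s}\,d\m$ is continuous in the $L^2(w\,d\m)$-norm.

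To finish I would apply \thref{rapidweight} to the pair $(E,w)$ to obtain a rapidly increasing $\bm{\gamma}$ for which the conclusion of \thref{XdiagProp} holds, and take $\bm{\alpha}$ to be the termwise minimum of $\bm{\beta}$ and $\bm{\gamma}$. Decreasing the weight only enlarges the spaces $X(\bm{\alpha})$, so the conclusion of \thref{XdiagProp} persists for $\bm{\alpha}$, and both functionals above remain continuous; hence each $\Lambda_{C_s}$, $s\in K_3$, extends continuously to $\mathcal{D}(\bm{\alpha}^{-1},w)$. Then the argument in the first paragraph of the proof of \thref{innerpermanenceCarleson} applies verbatim: if $\theta f_n\to f$ in $\mathcal{D}(\bm{\alpha}^{-1},w)$ with $f_n,f$ bounded, then by \thref{Dmult} the tuples are $(\theta f_n,\theta f_n)$ and $(f,f)$, the functionals annihilate the former, and continuity gives $\ip{f}{C_s} = 0$ for all $s\in K_3$; density of $\{C_s\}$ in $K_\theta$ yields $f\in(K_\theta)^\perp = \theta H^2$, and since $f$ is bounded, $f/\theta\in H^\infty$.

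The main obstacle is the smoothness step: one must be sure that the single product cut-off $g_Fg_E$ simultaneously annihilates the boundary singularities of $\theta_\Ca$ on $F$, of $\theta_\K$ on $E$, and the growth of $W$ near $E$, even when $F$ and $E$ are close or overlap. This is where it matters that \eqref{Gest} holds to infinite order and that $g_F$, $g_E$ decay near their respective sets independently, so that near any point of $E\cup F$ at least one factor forces the decay while the other stays bounded by $1$.
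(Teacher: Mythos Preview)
Your argument is correct, but it takes a somewhat different route from the paper's. The paper's proof is a single sentence: take the termwise minimum of the two sequences furnished by \thref{innerpermanenceCarleson} (applied to $\theta_\Ca = S_{\nu_\Ca}$) and \thref{innerpermanenceKorenblum} (applied to $\theta_\K = S_{\nu_\K}$). Convergence in $\mathcal{D}(\bm{\alpha}^{-1},w)$ then implies convergence in each of the two larger spaces, so permanence holds separately for $\theta_\Ca$ and for $\theta_\K$; since $\nu_\Ca$ and $\nu_\K$ are mutually singular, divisibility of $f$ by both factors forces divisibility by their product $\theta$. You instead merge the two constructions at the level of the test set: you use the product cut-off $g_Fg_E$ and the full $\theta$ to produce a single family $K_3$ whose Cauchy transforms are dense in $K_\theta$, and then run the splitting argument of \thref{innerpermanenceKorenblum} once. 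Your approach has the advantage of bypassing the (unstated in the paper) coprimality step, and it makes transparent why the single product cut-off suffices; the paper's approach is more modular and avoids re-verifying the smoothness estimates. Two minor remarks: per \eqref{Wformula} one has $|W| = w$ on $E$, not $\sqrt{w}$, though your Cauchy--Schwarz bound is unaffected; and the sequence $\bm{\gamma}$ should be obtained by invoking \thref{XdiagProp} itself rather than \thref{rapidweight}.
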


\begin{proof}
The required rapidly increasing sequence $\bm{\alpha}$ is the one obtained by constructing the termwise minimum of the sequences given by \thref{innerpermanenceCarleson} and \thref{innerpermanenceKorenblum}.
\end{proof}

The above result is essentially optimal. Indeed, if $\nu = \nu_\K$ in \eqref{nu-decomp} and $\nu(\T \setminus E) > 0$, then $S_\nu$ will be divisible by an inner function which is cyclic in $\mathcal{D}(\bm{\alpha}^{-1}, w)$ (and so certainly cannot satisfy the permanence property). This can be seen from the corresponding cyclicity result in \cite{ptmuinnner} for the class of $\Po^2(\mu)$-spaces appearing in \thref{uncertstrong}. In the other direction, we note that the main inner factor permanence result in \cite{ptmuinnner} is an immediate consequence of \thref{finalcorollary}. Here is the statement.

\begin{cor} \textbf{(The permanence principle for a class of $\Po^2(\mu)$-spaces)} \thlabel{ptmupermprinc} Let $C > -1$ and $E$ be a Beurling-Carleson set of positive measure. Let $w$ be a bounded positive measurable function which is supported on $E$ and satisfies $\int_E \log(w) d\m > -\infty$. Consider the measure $$d\mu = (1-|z|^2)^C dA + w d\m$$ and the classical Lebesgue space $L^2(\mu)$. Let $\Po^2(\mu)$ be the closure of analytic polynomials in $L^2(\mu)$. If $\theta = S_\nu$ be a singular inner function such that in the decomposition \eqref{nu-decomp} the part $\nu_\K$ is supported on $E$, then the pair $(\Po^2(\mu), \theta)$ satisfies the permanence property in \thref{permprinc}. 
\end{cor}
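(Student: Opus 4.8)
The plan is to realize $\Po^2(\mu)$ as boundedly contained in a space $\mathcal{D}(\bm{\alpha}^{-1}, w)$ of the type produced by \thref{finalcorollary}, and then transport the permanence property across this inclusion. The inclusion rests on a coefficient comparison. As computed in the proof of \thref{uncertstrong}, the Bergman part of $\mu$ contributes $\int_{\D} |f|^2 (1-|z|^2)^C \, dA = \sum_k \beta_k(C)|f_k|^2$ with $\beta_k(C) \simeq k^{-(C+1)}$, a merely polynomial decay. If $\bm{\alpha}$ is any rapidly increasing sequence, then $\alpha_k^{-1}$ decays faster than every power of $k$, so $\alpha_k^{-1} \leq c\,\beta_k(C)$ for all $k$ and a suitable $c > 0$. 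Since the boundary parts of the two norms coincide, we obtain $\|(f,f)\|_{\mathcal{D}(\bm{\alpha}^{-1},w)} \leq c'\,\|f\|_{\Po^2(\mu)}$ on analytic polynomials. By \thref{uncertstrong} the elements of $\Po^2(\mu)$ are genuine analytic functions (and $H^\infty \subset \Po^2(\mu)$), while by \thref{Dmult} the same holds for $\mathcal{D}(\bm{\alpha}^{-1}, w)$ with $H^2$-functions represented by the diagonal tuples $(f,f)$; thus the induced bounded map of completions is the identity on analytic functions, and convergence in $\Po^2(\mu)$ forces convergence in $\mathcal{D}(\bm{\alpha}^{-1}, w)$.

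With this in hand I would split $\theta = S_{\nu_\Ca} S_{\nu_\K}$ and divide the work according to the decomposition \eqref{nu-decomp}. For the $\nu_\Ca$ part, convergence in $\Po^2(\mu)$ in particular forces convergence in the weighted Bergman norm $\int_\D |\cdot|^2 (1-|z|^2)^C \, dA$; writing $\theta f_n = S_{\nu_\Ca}(S_{\nu_\K} f_n)$ with both factors bounded, the permanence of $S_{\nu_\Ca}$ in polynomially weighted Bergman norms recalled in the introduction (from \cite{smoothdensektheta}) yields $S_{\nu_\Ca} \mid f$. This step is independent of $E$ and handles $\nu_\Ca$ for its entire, possibly countably supported, mass.

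For the $\nu_\K$ part, which is supported on $E$, I would apply \thref{finalcorollary} to the inner function $S_{\nu_\K}$ (its $\nu_\Ca$-component being trivial, this is exactly \thref{innerpermanenceKorenblum}) to obtain a rapidly increasing $\bm{\alpha}$ for which $(\mathcal{D}(\bm{\alpha}^{-1}, w), S_{\nu_\K})$ has the permanence property. For this $\bm{\alpha}$ the inclusion of the first paragraph applies, so $\theta f_n \to f$ in $\mathcal{D}(\bm{\alpha}^{-1}, w)$ as well; writing $\theta f_n = S_{\nu_\K}(S_{\nu_\Ca} f_n)$ with bounded factors, permanence gives $S_{\nu_\K} \mid f$. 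Finally, since $\nu_\Ca \perp \nu_\K$, the divisibilities $S_{\nu_\Ca} \mid f$ and $S_{\nu_\K} \mid f$ combine to $\theta = S_{\nu_\Ca} S_{\nu_\K} \mid f$, that is, $f/\theta \in H^\infty$, which is the asserted permanence. The only point requiring care is that the $E$-supported Cauchy-transform machinery behind \thref{finalcorollary} cannot by itself reach $\nu_\Ca$, whose support need not meet $E$; this is precisely why the independent Bergman permanence fact must be invoked for that piece, after which the two conclusions merge trivially by mutual singularity.
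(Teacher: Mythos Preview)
Your argument is correct. The core step---the coefficient comparison showing that $\Po^2(\mu)$ embeds continuously into $\mathcal{D}(\bm{\alpha}^{-1}, w)$ for any rapidly increasing $\bm{\alpha}$---is exactly what the paper has in mind when it says the proof is ``similar to the proof of \thref{uncertstrong}'' and declares the corollary an immediate consequence of \thref{finalcorollary}.

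Where you diverge slightly is in the treatment of $\nu_\Ca$. The paper intends to invoke \thref{finalcorollary} directly for the full $\theta$, staying entirely within the machinery built in Sections~\ref{uncertaintysec}--\ref{permanencesec}. Since \thref{finalcorollary} as stated requires $\nu_\Ca$ to live on a \emph{single} Beurling--Carleson set $F$ of measure zero, the general case (countably many such sets) is handled by an easy exhaustion: for each $n$ apply \thref{finalcorollary} to $\theta_n = S_{\nu_\Ca|F_1\cup\cdots\cup F_n}\,S_{\nu_\K}$, embed $\Po^2(\mu)$ into the resulting $\mathcal{D}(\bm{\alpha}_n^{-1}, w)$, deduce $\theta_n \mid f$, and pass to the limit. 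You instead outsource the $\nu_\Ca$ piece to the Bergman permanence result of \cite{smoothdensektheta} quoted in Section~\ref{permanencesec}, and reserve the embedding for $\nu_\K$ alone via \thref{innerpermanenceKorenblum}. This is perfectly valid and arguably cleaner for the countable $\nu_\Ca$ case, but it imports an external theorem that the paper's development was partly designed to supersede. One small inaccuracy in your final remark: the obstruction to handling $\nu_\Ca$ through \thref{finalcorollary} is not that its support need not meet $E$ (that result includes \thref{innerpermanenceCarleson}, which is indifferent to $E$), but rather the single--versus--countable Beurling--Carleson set restriction.
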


We skip the proof, which is similar to the proof of \thref{uncertstrong}. 

\section{Density of smooth functions in extreme $\hb$ spaces}

This final section is devoted to the proof of density of smooth functions in the class of de Branges-Rovnyak spaces described in Section \ref{introsec}.

\label{dbrsec}

\subsection{A little background on $\hb$}

The following construction of the $\hb$ space appears in \cite{dbrcont}. 

\begin{prop} \thlabel{normformula}
Let $b$ be an extreme point of the unit ball of $H^\infty$, $$E = \{ \zeta \in \T : |b(\zeta)| < 1 \},$$ and let $\Delta = \sqrt{1-|b|^2}$ be a function on the circle $\T$, defined in terms of boundary values of $b$ on $\T$. For $f\in \Hb$
the equation
\begin{equation} \label{hbconteq} P_+ \conj{b}f = -P_+ \Delta g \end{equation} has a unique solution $g\in L^2(E)$, and the map $J:\Hb\to H^2\oplus L^2(E)$  defined by $$Jf=(f,g),$$ 
is an isometry. Moreover,  \begin{equation} \label{Jort} J(\Hb)^\perp = \Big\{ (bh, \Delta h) : h \in H^2 \Big\}. \end{equation}
\end{prop}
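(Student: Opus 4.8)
```latex
\textbf{Proof proposal.} The plan is to verify the construction of Sarason (in the form presented in \cite{dbrcont}) by directly checking the three claims: that \eqref{hbconteq} has a unique solution $g \in L^2(E)$, that $J$ is an isometry, and that the orthogonal complement of its range is given by \eqref{Jort}. Since $b$ is an extreme point of the unit ball of $H^\infty$, we have $\int_\T \log(1-|b|^2) \, d\m = -\infty$, which is the precise condition distinguishing this ``extreme'' case from the non-extreme one, and this is exactly what forces the second coordinate $g$ to live on $E = \{|b| < 1\}$ and prevents the analytic polynomials from being dense. Recall that the $\hb$ norm can be characterized via the Toeplitz operator $T_{1-|b|^2}$, or equivalently that $f \in \hb$ iff $P_+\conj{b}f$ lies in the range of the (appropriately interpreted) operator associated with $\Delta$; I would take as my starting point whichever of these characterizations is available from the background references, and translate it into the claimed solvability of \eqref{hbconteq}.

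First I would establish existence and uniqueness of $g$. For uniqueness, note that $\Delta = \sqrt{1-|b|^2}$ is strictly positive a.e. on $E$ and vanishes off $E$; if $P_+\Delta g_1 = P_+ \Delta g_2$ with $g_1, g_2 \in L^2(E)$, then $\Delta(g_1 - g_2) \in \conj{zH^2}$, and since $\Delta(g_1-g_2)$ is supported on $E$ and $E$ has positive measure, a standard F.\ and M.\ Riesz / log-integrability argument forces $g_1 = g_2$ (this uses that a function in $\conj{H^2}$ vanishing off a set of positive measure must vanish identically). Existence of $g$ and the isometry property $\|f\|_{\hb}^2 = \|f\|_{H^2}^2 + \|g\|_{L^2(E)}^2$ I would read off from the standard formula $\|f\|_{\hb}^2 = \|f\|_{H^2}^2 + \|g\|_2^2$ that defines the complementary space; this is essentially the defining relation of the model in \cite{dbrcont}, so the content here is to match notations rather than to prove something genuinely new.

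The substantive step is the identification \eqref{Jort} of the orthogonal complement. The strategy is to compute, for $(f,g) = Jf$ with $f \in \hb$ and an arbitrary candidate $(bh, \Delta h)$ with $h \in H^2$, the inner product
\begin{equation*}
\ip{(f,g)}{(bh,\Delta h)}_{H^2 \oplus L^2(E)} = \ip{f}{bh}_{H^2} + \ip{g}{\Delta h}_{L^2(E)}.
\end{equation*}
Using $\ip{f}{bh}_{H^2} = \ip{\conj{b}f}{h}_{L^2} = \ip{P_+\conj{b}f}{h}_{H^2}$ (valid since $h \in H^2$), and the defining relation \eqref{hbconteq} which gives $P_+\conj{b}f = -P_+\Delta g$, I would rewrite the first term as $-\ip{P_+\Delta g}{h}_{H^2} = -\ip{\Delta g}{h}_{L^2} = -\ip{g}{\Delta h}_{L^2(E)}$, where the last equality uses that $\Delta$ is real and supported on $E$. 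The two terms then cancel, showing that $J(\hb)$ is orthogonal to every such $(bh,\Delta h)$. To get equality of the annihilator rather than mere containment, I would then check that the set $\{(bh,\Delta h): h \in H^2\}$ is closed in $H^2 \oplus L^2(E)$ and that $J(\hb)$ together with this set spans the whole product space; the closedness follows because $\|(bh,\Delta h)\|^2 = \|bh\|_2^2 + \|\Delta h\|_2^2 = \|h\|_{H^2}^2$ (since $|b|^2 + \Delta^2 = 1$ a.e.\ on $\T$ and $|b|=1$ off $E$), so $h \mapsto (bh,\Delta h)$ is an isometry from $H^2$ with closed range.

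The main obstacle I anticipate is the final dimension-counting or spanning argument needed to upgrade orthogonality to the exact description of the complement: I must rule out that $J(\hb) \oplus \{(bh,\Delta h)\}$ is a proper subspace of $H^2 \oplus L^2(E)$. The natural way to close this gap is to show that any $(u,v) \in H^2 \oplus L^2(E)$ orthogonal to both $J(\hb)$ and the range set must be zero, which reduces to a surjectivity statement: every element of $H^2 \oplus L^2(E)$ decomposes as $Jf + (bh,\Delta h)$. This in turn should follow from the solvability of \eqref{hbconteq} across all of $H^2 \oplus L^2(E)$ and the extreme-point hypothesis on $b$, but the bookkeeping connecting the abstract complementary-space definition of $\hb$ to the concrete pair $(f,g)$ is where care is required; I would lean on the construction in \cite{dbrcont} to supply the precise form of this decomposition rather than rederiving the entire $\hb$ model from scratch.
```
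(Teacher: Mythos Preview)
The paper does not actually supply a proof of this proposition: it is stated with the preamble ``The following construction of the $\hb$ space appears in \cite{dbrcont}'' and then simply quoted from that reference. So there is no proof in the paper to compare your attempt against.

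That said, your sketch is a correct outline of how one verifies the statement. A few comments. Your uniqueness argument needs a small case split you gloss over: if $m(\T\setminus E)>0$ then a nonzero element of $\conj{zH^2}$ cannot vanish on $\T\setminus E$, while if $E=\T$ a.e.\ you must instead use that $b$ extreme forces $\int_\T\log\Delta\,d\m=-\infty$, which is incompatible with $\Delta g=\conj{k}$ for a nonzero $k\in zH^2$ (since then $\log|k|=\log\Delta+\log|g|$ would fail to be integrable from below). Your orthogonality computation and the isometry $\|(bh,\Delta h)\|^2=\|h\|_{H^2}^2$ are both fine. For the spanning step you correctly flag as the obstacle, the efficient route is the one you almost write down: if $(u,v)\in H^2\oplus L^2(E)$ is orthogonal to every $(bh,\Delta h)$, then $P_+\conj{b}u=-P_+\Delta v$, which is exactly \eqref{hbconteq}; one then invokes the Sarason characterization of $\hb$-membership (which is precisely what \cite{dbrcont} packages) to conclude $u\in\hb$ with $Ju=(u,v)$, so that orthogonality to $J(\hb)$ forces $(u,v)=0$. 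Your instinct to lean on \cite{dbrcont} for that last implication is the right call and matches what the paper itself does by citing the result wholesale.
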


The benefit of the above described way of constructing the $\hb$ space (that is, using the embedding $J$ above, and an orthogonal complement) is that it will be particularly easy to implement our duality argument. 

We need only one more lemma before going into the final proof.

\begin{lemma} \thlabel{densereductionlemma}
Let $b = \theta u$ be an extreme point of the unit ball of $H^\infty$, where $\theta$ and $u$ are the inner and outer factors of $b$, respectively. Further, let $\{\theta_n\}_n$ be a sequence of inner divisors of $\theta$ such that \[ \lim_{n \to \infty} \theta_n(z) = \theta(z)\]  for all $z \in \D$, and let $\{E_n\}_n$ be a sequence of subsets of $\T$ such that \[ E := \{ \zeta \in \T : |b(\zeta)| < 1 \} = \cup_n E_n\]  up to a set of Lebesgue measure zero. For $n \geq 1$, let $u_n$ be the outer function with modulus \[ |u_n| = 1_{E \setminus E_n} + |b|1_{E_n} \] on $\T$. Set $b_n = \theta_n u_n$. Then $\hil(b_n)$ is contractively contained in $\hb$, and $\cup_n \hil(b_n)$ is norm-dense in $\hb$.
\end{lemma}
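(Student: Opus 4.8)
The plan is to establish the two assertions separately, beginning with the contractive containment $\hil(b_n) \subset \hb$ and then the norm-density of the union. For the containment, I would invoke the standard criterion for contractive inclusion of de Branges-Rovnyak spaces: $\hil(b_n)$ is contractively contained in $\hb$ precisely when the function $\tfrac{1-|b|^2}{1-|b_n|^2}$, or equivalently the difference of reproducing kernels $\tfrac{1-b\conj{b(w)}}{1-z\conj{w}} - \tfrac{1-b_n\conj{b_n(w)}}{1-z\conj{w}}$, is a positive kernel on $\D$. Here the construction of $b_n = \theta_n u_n$ is tailored so that this works: on $\T$ one has $|b_n|^2 = 1_{\T \setminus E} + |b|^2 1_{E_n} + 1_{E \setminus E_n}$, so that $1-|b_n|^2 = (1-|b|^2)1_{E_n} = \Delta^2 1_{E_n} \leq \Delta^2 = 1-|b|^2$ almost everywhere on $\T$. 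Combined with the fact that $\theta_n$ divides $\theta$ (so $b/b_n = (\theta/\theta_n)(u/u_n)$ is a well-defined function of modulus at most one on $\T$), this pointwise domination of the defining weights should yield the contractive inclusion via Sarason's theory.

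For the density, the natural route is a duality argument using the embedding $J$ of \thref{normformula}. I would show that the only element of $\hb$ orthogonal to every $\hil(b_n)$ is the zero function. Using \eqref{Jort}, an element $f \in \hb$ with $Jf = (f,g)$ is orthogonal to $\hil(b_n)$ when the corresponding tuple pairs to zero against all of $J(\hil(b_n))$; translating through the orthogonal complement description $J(\hil(b_n))^\perp = \{(b_n h, \Delta_n h) : h \in H^2\}$ with $\Delta_n = \sqrt{1-|b_n|^2} = \Delta 1_{E_n}$, the orthogonality conditions become a family of integral identities relating $f$, $g$, the symbols $b_n$, and the weights $\Delta 1_{E_n}$. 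The hypotheses $\theta_n \to \theta$ pointwise in $\D$ and $E = \cup_n E_n$ are exactly what is needed to pass to the limit: as $n \to \infty$ the symbols $b_n$ converge to $b$ and the weights $\Delta 1_{E_n}$ increase to $\Delta$ on $E$, so the limiting orthogonality conditions force $f$ to be orthogonal to $J(\hb)^\perp = \{(bh,\Delta h) : h \in H^2\}$, hence $f \equiv 0$.

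The main obstacle I anticipate is the limiting step in the density argument: one must justify interchanging the limit with the integral pairings, and ensure that the convergence $\theta_n \to \theta$ together with $1_{E_n} \uparrow 1_E$ produces genuine weak convergence of the subspaces $J(\hil(b_n))$ in a sense strong enough to control the orthogonal complements. This requires uniform control, presumably a dominated-convergence or monotone-convergence argument exploiting that $|b_n| \leq 1$ and $\Delta 1_{E_n} \leq \Delta \in L^2(\T)$, so that the relevant sequences are bounded in $H^2 \oplus L^2(E)$ and one may extract the weak limits. A secondary technical point is verifying that the solution map $g$ in \eqref{hbconteq} behaves compatibly across the spaces $\hil(b_n)$ and $\hb$, i.e. that the second coordinates of the embeddings are consistently supported and dominated, which is where the explicit modulus $|u_n| = 1_{E \setminus E_n} + |b|1_{E_n}$ pays off.
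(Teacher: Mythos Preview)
Your plan for the contractive containment is essentially the paper's: show that $b/b_n$ is a Schur-class function, so that $k_b - k_{b_n} = \conj{b_n(\lambda)}b_n(z)\,k_{b/b_n}$ is positive definite, and invoke standard reproducing kernel Hilbert space theory. That part is fine.

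The density argument, however, has a genuine logical gap. You write that the limiting orthogonality conditions force $Jf$ to be \emph{orthogonal to} $J(\hb)^\perp = \{(bh,\Delta h):h\in H^2\}$, and conclude $f\equiv 0$. But orthogonality to $J(\hb)^\perp$ only says $Jf\in J(\hb)$, which you already know. What you presumably intend is that $Jf$ \emph{lies in} the closure of $\{(b_nh,\Delta_nh)\}$ for each $n$, and in the limit lies in $J(\hb)^\perp$, forcing $Jf\in J(\hb)\cap J(\hb)^\perp=\{0\}$. Even read charitably, this route is heavier than you suggest: you must first verify that the $\hb$-embedding $J$ restricted to $\hil(b_n)$ coincides with the intrinsic embedding $J_n$ of $\hil(b_n)$ (your ``secondary technical point'' is not secondary---it is the crux); then the orthogonal complement of $J_n(\hil(b_n))$ inside $H^2\oplus L^2(E)$ is not $\{(b_nh,\Delta_nh)\}$ alone but also contains $\{0\}\oplus L^2(E\setminus E_n)$, so $Jf=(b_nh_n,\Delta_nh_n)+(0,k_n)$ with a remainder $k_n$ supported on $E\setminus E_n$. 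Extracting $f=b_nh_n$ with $h_n\in H^2$ and controlling $h_n$ and $k_n$ as $n\to\infty$ is a nontrivial task you have not addressed.

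The paper avoids all of this with a much shorter argument: once contractive containment is known, it gives the uniform bound $\|k_{b_n}(\lambda,\cdot)\|_{\hb}^2 \le k_{b_n}(\lambda,\lambda)\le k_b(\lambda,\lambda)$. Since $u_n(z)\to u(z)$ and $\theta_n(z)\to\theta(z)$ pointwise on $\D$, one has $b_n\to b$ pointwise and hence $k_{b_n}(\lambda,z)\to k_b(\lambda,z)$ for each $z,\lambda\in\D$. Norm-boundedness plus pointwise convergence yields weak convergence $k_{b_n}(\lambda,\cdot)\rightharpoonup k_b(\lambda,\cdot)$ in $\hb$, and since the kernels $k_b(\lambda,\cdot)$ span a dense set, $\cup_n\hil(b_n)$ is dense. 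This bypasses the embedding compatibility issue entirely.
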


\begin{proof}
Let $k_b$ and $k_{b_n}$ be the reproducing kernels of $\hb$ and $\hil(b_n)$, respectively. Note that the assumptions imply that $b_n$ divides $b$, in the sense that \begin{equation}
    \label{bbn} |b(z)/b_n(z)| \leq 1, \quad z \in \D.
\end{equation} Then \[ k_b(\lambda,z) - k_{b_n}(\lambda, z) = \frac{\conj{b_n(\lambda)} b_n(z) - \conj{b(\lambda)}b(z)}{1-\conj{\lambda}z} = \conj{b_n(\lambda)}b_n(z)\frac{1 - \conj{b/b_n(\lambda)}b/b_n(z)}{1-\conj{\lambda}z} \] is clearly a positive definite kernel, so by standard theory of reproducing kernel Hilbert spaces (see, for instance, \cite{aronszajn1950theory} or \cite{mccarthypick}) it follows that $\hil(b_n)$ is contractively contained in $\hb$. Moreover, contractivity of the containment means that \[ \|k_{b_n}(\lambda, \cdot)\|^2_{\hb} \leq \|k_{b_n}(\lambda, \cdot)\|^2_{\hil(b_n)} = \frac{1-|b_n(\lambda)|^2}{1-|\lambda|^2} \leq \frac{1-|b(\lambda)|^2}{1-|\lambda|^2}, \] where in the last step we used \eqref{bbn}. So for fixed $\lambda$, the functions $k_{b_n}(\lambda, \cdot )$ are norm-bounded in $\hb$. It is not hard to see from the usual construction of the outer functions that $u_n(z) \to u(z)$ as $n \to \infty$, for every $z \in \D$. Consequently $b_n(z) \to b(z)$ for each $z \in \D$, and even \[ \lim_{n \to \infty} k_{b_n}(\lambda,z) = k_b(\lambda,z), \quad z,\lambda \in \D. \] Together with the norm estimate above, this means that for any fixed $\lambda \in \D$, a suitable subsequence of the kernels $k_{b_n}(\lambda, \cdot)$ will converge weakly in $\hb$ to $k_b(\lambda, \cdot)$. Elementary functional analysis now ensures that $\cup_n \hil(b_n)$ is dense in $\hb$. 
\end{proof}

We remark that a simple consequence of the contractive containment of $\hil(b_n)$ in $\hb$ is the following: density of $\A^\infty \cap \hil(b_n)$ in $\hil(b_n)$ for each $n$ implies density of $\A^\infty \cap \hil(b)$ in $\hb$.

\subsection{The density theorem}

In the below proof, we use the duality pairing $\ip{\cdot}{\cdot}$ appearing in Section \ref{Xalphaduality}. For a set $S$ in either $X(\bm{\alpha})$ or $X(\bm{\alpha^{-1}})$, we denote by $S^\perp$ the linear space of elements in the other space which is annihilated by $S$ under the duality. Basic Hilbert space theory says that $(S^\perp)^\perp$ is the norm-closure of $S$.

\begin{thm} \textbf{($\A^\infty$-density theorem in $\Hb$-spaces)}
Let $b = \theta u$ be an extreme point of the unit ball of $H^\infty$ satisfying the following assumptions. 

\begin{enumerate}[(i)]
    \item There exists an increasing sequence $\{E_n\}_n$ of Beurling-Carleson sets of positive measure such that, up to a set of Lebesgue measure zero, we have the equality \[ E :=  \{ \zeta \in \T : |b(\zeta)| < 1 \} = \cup_n E_n \]
    and \[ \int_{E_n} \log (1-|b|^2) \, d\m > -\infty, \quad \text{ for all } n. \] 
    \item If $\theta = BS_\nu$, where $B$ is a Blaschke product and $\nu$ is the measure defining the singular inner factor as in \eqref{innerformula}, then in the decomposition \eqref{nu-decomp} the part $\nu_\K$ which vanishes on Beurling-Carleson sets of Lebesgue measure zero satisfies $\nu_\K(\T \setminus E) = 0$.
\end{enumerate}

Then $\A^\infty \cap \hb$ is norm-dense in $\hb$.
\end{thm}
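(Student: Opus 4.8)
The plan is to combine the reduction \thref{densereductionlemma} with the permanence principle \thref{finalcorollary} through the duality description \eqref{Jort} of $J(\hb)^\perp$. First I would use \thref{densereductionlemma} to reduce the problem: by the remark following that lemma, it suffices to exhibit, for each fixed $n$, a function $b_n = \theta_n u_n$ whose space $\hil(b_n)$ has smooth functions dense, and for which $\hil(b_n)$ is contractively contained in $\hb$ with the union $\cup_n \hil(b_n)$ dense. The natural choice is to take $E_n$ as in hypothesis $(i)$, let $\theta_n$ be the inner divisor of $\theta$ capturing the Blaschke zeros and the singular mass on $E_n$ together with the $\nu_\Ca$-part supported on $F$, and let $u_n$ be the outer function with modulus $1_{E \setminus E_n} + |b| 1_{E_n}$ so that $1 - |b_n|^2 = w_n 1_{E_n}$ for a weight $w_n$ which is $\log$-integrable on the Beurling-Carleson set $E_n$ by the integrability assumption in $(i)$. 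The role of hypothesis $(ii)$, that $\nu_\K(\T \setminus E) = 0$, is precisely to guarantee that the $\nu_\K$-mass of $\theta$ is carried by $E = \cup_n E_n$, so that for large $n$ the relevant singular inner factor is supported on $E_n$ and falls under \thref{finalcorollary}.

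The core of the argument is then a duality computation for a single fixed $b_n$. I would work with the embedding $J$ from \thref{normformula}, under which $\hil(b_n)$ is isometric to the orthogonal complement in $H^2 \oplus L^2(E_n)$ of the manifold $\{ (b_n h, \Delta_n h) : h \in H^2\}$, where $\Delta_n = \sqrt{1 - |b_n|^2} = \sqrt{w_n}\, 1_{E_n}$. To prove density of $\A^\infty \cap \hil(b_n)$ it is enough, by a standard Hahn-Banach argument, to show that any continuous linear functional annihilating all smooth members of $\hil(b_n)$ annihilates the whole space. I would represent such a functional by a pair in the ambient space and transport the problem into the space $\mathcal{D}(\bm{\alpha}^{-1}, w_n) = X(\bm{\alpha}^{-1}) \oplus L^2(w_n\, d\m)$ furnished by \thref{finalcorollary}, applied with the set $E_n$, weight $w_n$, and the singular inner function $\theta_n$. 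Here one uses that $X(\bm{\alpha})$ consists of $\A^\infty$-functions, so testing against smooth functions corresponds to testing against a dense family inside $X(\bm{\alpha}^{-1})$; the point is that the annihilator condition forces the first-coordinate datum to be orthogonal to $K_{\theta_n}$ in the pairing, hence divisible by $\theta_n$ via the permanence property. The permanence statement in \thref{finalcorollary} is exactly what converts a weak-convergence statement about approximants $\theta_n f_j$ into genuine divisibility of the limit by $\theta_n$, which is the algebraic fact needed to conclude that the annihilating functional lies in $J(\hil(b_n))^\perp$ as described by \eqref{Jort}.

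Concretely, I would fix a tuple $(F_1, F_2) \in X(\bm{\alpha}^{-1}) \oplus L^2(w_n\, d\m)$ representing a functional that annihilates the smooth elements of $\hil(b_n)$, write out the pairing against the generators $(b_n h, \Delta_n h)$ and against smooth test functions, and read off two conditions: one forcing the analytic datum to be orthogonal to $K_{\theta_n}$, and one matching the two coordinates on $E_n$ through the relation $|W| = w_n$ used throughout Section \ref{permanencesec}. Applying the permanence property of $(\mathcal{D}(\bm{\alpha}^{-1}, w_n), \theta_n)$ then yields that the functional annihilates every generator, so it annihilates all of $\hil(b_n)$, giving the desired density. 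Finally I would assemble the pieces: smooth density holds in each $\hil(b_n)$, the spaces $\hil(b_n)$ are contractively contained in $\hb$ with dense union by \thref{densereductionlemma}, and the remark after that lemma upgrades per-$n$ smooth density to smooth density in $\hb$.

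I expect the main obstacle to be the careful bookkeeping in the duality transport: one must verify that the two pieces of the functional, coming from the Cauchy transform decomposition $u_0 = u_1 + u_2$ as in \eqref{udecomp}, genuinely split as a continuous functional on $X(\bm{\alpha}^{-1})$ plus a continuous functional on $L^2(w_n\, d\m)$, and that the weight $w_n = 1 - |b_n|^2$ matches the outer function $W$ from \eqref{Wformula} closely enough that the boundedness estimate $\int_{E_n} f\conj{s}\, d\m \leq C \|f\sqrt{w_n}\|_{L^2}$ from the proof of \thref{innerpermanenceKorenblum} applies. A secondary subtlety is ensuring that the smooth functions constructed as Cauchy transforms actually lie in $\hil(b_n)$ and are dense enough in $K_{\theta_n}$ to drive the argument; this is where the density statements of \thref{constrKhru} and the outer-ness of $g_{E_n} W$ must be invoked with the correct set $E_n$ at each level $n$.
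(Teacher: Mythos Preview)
Your outline assembles the right tools (\thref{densereductionlemma}, \thref{finalcorollary}, \eqref{Jort}) and the reduction paragraph is fine, but the core duality step is garbled. The tuples $(b_n h,\Delta_n h)$ are not ``generators'' of $\hil(b_n)$; by \eqref{Jort} they span its \emph{orthogonal complement}. The argument in the paper runs dually to what you describe: one starts from an element $f\in\hb$ orthogonal to $X(\bm{\alpha})\cap\hb$, so that $Jf=(f,g)\in H^2\oplus L^2(E)$, and the decisive observation --- absent from your plan --- is that $J(X(\bm{\alpha})\cap\hb)$ is exactly the pre-annihilator in $X(\bm{\alpha})\oplus L^2(E)$ of the set $\{(bh,\Delta h):h\in H^2\}\subset X(\bm{\alpha}^{-1})\oplus L^2(E)$. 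The bipolar identity $(S^\perp)^\perp=\overline{S}$ then places $(f,g)$ in the $X(\bm{\alpha}^{-1})\oplus L^2(E)$-closure of that set, producing a sequence $(bh_n,\Delta h_n)\to(f,g)$ in this weaker norm. This is how the convergent sequence to which the permanence property can be applied is manufactured; your ``read off two conditions from the pairing'' does not supply it, and your suggestion to represent the annihilating functional by a general tuple in $X(\bm{\alpha}^{-1})\oplus L^2(w_n\,d\m)$ loses the crucial fact that its first coordinate already lies in $H^2$.

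Two further steps are also missing from your sketch. First, after multiplying the second coordinate by $b$ one obtains $bh_n\to f$ in $\mathcal{D}(\bm{\alpha}^{-1},w)$ with $w=\Delta^2$; the permanence property of \thref{finalcorollary} then yields $\theta\mid f$, and part (ii) of \thref{Dmult} --- the fact that $\mathcal{D}(\bm{\alpha}^{-1},w)$ is a genuine space of analytic functions in which an $H^2$-function is represented by the tuple $(f,f)$ --- identifies the boundary limit on $E$ and forces $g=\Delta f/b$. Second, the actual vanishing: one writes $f/b=\conj{b}f+\Delta g$ on $\T$, deduces $f/b\in H^2$ via the Smirnov class (using $\theta\mid f$), and invokes \eqref{hbconteq} to get $f/b=P_+(\conj{b}f+\Delta g)=0$. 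Finally, your last-paragraph concern about re-running the decomposition \eqref{udecomp} is misplaced: that computation lives inside the proof of \thref{innerpermanenceKorenblum} and is already absorbed into \thref{finalcorollary}, which you need only invoke as a black box.
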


Before going into the proof, we remind the reader of what was remarked in Section \ref{introsec}, that the condition $\nu_\K( \T \setminus E) = 0$ appearing in $(iii)$ above is essentially necessary, and that some type of structure condition on the set $E$, and a size condition on the weight $(1-|b|^2)$, also are necessary (examples are mentioned in \cite{DBRpapperAdem}). 

The proof below is essentially the same as the one given in the context of $\Po^2(\mu)$-spaces in \cite{DBRpapperAdem}.

\begin{proof}
By \thref{densereductionlemma} and the remark following it, we can assume that $E$ is a Beurling-Carleson set of positive measure, and the inner factor has a factorization $$\theta = BS_\nu = BS_{\nu_\Ca}S_{\nu_\K},$$ where $\nu_\Ca$ is supported on a single Beurling-Carleson set $F$ of Lebesgue measure zero.

We set $w := \Delta^2 = (1-|b|^2)1_E,$ and apply \thref{finalcorollary} to the data $E, F, S_{\nu_\Ca}, S_{\nu_\K}$ to obtain a rapidly increasing sequence $\bm{\alpha}$ and a space $\mathcal{D}(\bm{\alpha}^{-1}, w)$ which satisfies the conclusion of \thref{finalcorollary}. Since $\bm{\alpha}$ is rapidly increasing, the space $X(\bm{\alpha})$ consists of functions in $\A^\infty$. We will show that $X(\bm{\alpha}) \cap \hb$ is dense in $\hb$. 

Assume that $f \in \hb$ is orthogonal to all functions in $X(\bm{\alpha}) \cap \hb$. In terms of the embedding $J$ appearing in \thref{normformula}, this means that the tuple $Jf := (f,g)$ is orthogonal in $H^2 \oplus L^2(E)$ to all tuples in $J(X(\bm{\alpha}) \cap \hb)$. By the definition of the duality pairing appearing in Section \ref{Xalphaduality}, this means that \[(f,g) \in X(\bm{\alpha^{-1}}) \oplus L^2(E)\] annihilates \[J(X(\bm{\alpha}) \cap \hb) \subset X(\bm{\alpha}) \oplus L^2(E).\] Now, from \eqref{Jort}, we see that the set $J(X(\bm{\alpha}) \cap \hb) \subset X(\bm{\alpha}) \oplus L^2(E)$ can be expressed as the pre-annihilator.  Then 

\begin{equation} \label{preannih}
    J(X(\bm{\alpha}) \cap \hb) = \{ (bh, \Delta h) \in X(\bm{\alpha^{-1}}) \oplus L^2(E) : h \in H^2 \}^\perp.
\end{equation} 
We are in a Hilbert space setting, so it follows from the duality remarks above that there exists a sequence $(h_n)_n$ of functions in $H^2$, such that $(bh_n, \Delta h_n)$ converges to $(f,g)$ in the norm of $X(\bm{\alpha^{-1}}) \oplus L^2(E)$. By passing to a subsequence, we may assume that the convergence of $\Delta h_n$ to $g$ happens also pointwise almost everywhere on $E$. Mutiplying the second coordinate by the bounded function $b$, we read that the elements $(bh_n, \Delta b h_n)$ converge to $(f,bg)$ in the norm $X(\bm{\alpha^{-1}}) \oplus L^2(E)$, and in particular the tuples $(bh_n, \Delta b h_n)$ form a Cauchy sequence. In fact, since $w = \Delta^2$, this is equivalent to \[ \lim_{n,m \to \infty} \|bh_n - bh_m\|_{X(\bm{\alpha^{-1})}} + \|bh_n - bh_m\|_{L^2(w \, d\m)} = 0, \] so the sequence $(bh_n, bh_n)$ converges in the space $\mathcal{D}(\bm{\alpha}^{-1}, w)$ which is a space of analytic functions. The limit function must be $f \in H^2$, by the above. Now we apply part $(ii)$ of \thref{Dmult}, which tells us that $bh_n \to f$ pointwise on the set $E$ (it is precisely at this point where $M_z$ being completely non-isometric on $D(\bm{\alpha^{-1}}, w)$ is crucial, else the sequence $\{bh_n\}_n$ could potentially converge to something else than $f$). Thus, we have \[ b(\zeta)g(\zeta) = \lim_{n \to \infty} \Delta(\zeta) b(\zeta)h_n(\zeta) = \Delta(\zeta) f(\zeta) \] for almost every $\zeta \in E$. Thus $g = \Delta f/b$ on $E$. All in all, have identified the tuple $Jf$ as \[ Jf = (f, \Delta f/b) \] Moreover, by the permanence property of $(\mathcal{D}(\bm{\alpha}^{-1}, w),\theta) $, $f$ is divisible by the inner factor $\theta$ of $b$. 

Note that on $\T$, we have 
\begin{gather*} 
f/b = (|b|^2 + \Delta^2)f/b = \conj{b}f + \Delta g. 
\end{gather*} Since the right-hand side is square-integrable, so is the left. Since the inner factor of $f$ is divisible by the inner factor of $b$, we conclude that $f/b$ is a function in the Smirnov class of the unit disk which has square-integrable boundary values, and so $f/b \in H^2$ (see \cite[]{garnett}). But by \eqref{hbconteq}, we get \[ f/b = P_+(f/b) = P_+(\conj{b}f + \Delta g) = 0. \] So $f/b \equiv 0$, and hence $f \equiv 0$. The proof is complete.
\end{proof}

\bibliographystyle{siam}
\bibliography{mybib}

\Addresses

\end{document}